\definecolor{darkred}{RGB}{139,0,0}
\definecolor{darkblue}{RGB}{0,0,139}
\definecolor{darkgreen}{RGB}{0,100,0}
\newtheorem{thm}{Theorem}[section]
\newtheorem{cor}[thm]{Corollary}
\newtheorem{lem}[thm]{Lemma}
\newtheorem{prop}[thm]{Proposition}
\theoremstyle{definition}
\newtheorem{defn}[thm]{Definition}
\theoremstyle{remark}
\newtheorem{rem}[thm]{Remark}
\newtheorem{example}[thm]{Example}
\numberwithin{equation}{section}
\newcommand{\bF}{\mathbb{F}}
\newcommand{\bL}{\mathbb{L}}
\newcommand{\bN}{\mathbb{N}}
\newcommand{\bQ}{\mathbb{Q}}
\newcommand{\bR}{\mathbb{R}}
\newcommand{\bZ}{\mathbb{Z}}
\newcommand{\gA}{\bold{A}}
\newcommand{\gC}{\bold{C}}
\newcommand{\gE}{\bold{E}}
\newcommand{\gM}{\bold{M}}
\newcommand{\gR}{\bold{R}}
\newcommand{\gS}{\bold{S}}
\newcommand{\gT}{\bold{T}}
\newcommand{\gX}{\bold{X}}
\newcommand\lra{\longrightarrow}
\newcommand\hocolim{\operatorname*{hocolim}}
\newcommand\colim{\operatorname*{colim}}
\newcommand\inj{\text{inj}}
\newcommand{\bunit}{\mathbbm{1}}
\newcommand{\bk}{\mathbbm{k}}
\title[Classical homological stability from the point of view of cells]{Classical homological stability\\from the point of view of cells}
\author{Oscar Randal-Williams}
\email{o.randal-williams@dpmms.cam.ac.uk}
\address{Centre for Mathematical Sciences\\
Wilberforce Road\\
Cambridge CB3 0WB\\
UK}
\date{\today}
\keywords{Homological stability, $E_k$-algebras}
\subjclass{
55P48,
20J05, 
}
\begin{document}

\begin{abstract}
We explain how to interpret the complexes arising in the ``classical'' homology stability argument (e.g.\ in the framework of Randal-Williams--Wahl) in terms of higher algebra, which leads to a new proof of homological stability in this setting. The key ingredient is a theorem of Damiolini on the contractibility of certain arc complexes. We also explain how to directly compare the connectivities of these complexes with that of the ``splitting complexes'' of Galatius--Kupers--Randal-Williams.
\end{abstract}

\maketitle

\section{Introduction}

The goal of this note is to compare the classical approach to homological stability, specifically the formalisation of Quillen's approach given by Wahl and myself \cite{RWW}, with the more recent approach via cellular $E_k$-algebras developed by Galatius, Kupers, and myself \cite{e2cellsI}. It is an insight of Krannich \cite{Krannich} that the proper generality for the classical approach is to work in the category of $\bN$-graded topological spaces, and start with a right $E_1$-module $\mathbf{M}$ over an $E_2$-algebra $\mathbf{R}$ equipped with compatible $\bN$-gradings, a stabilising element $\sigma \in \mathbf{R}(1)$, and then ask about homological stability of the sequence of maps
$$\mathbf{M}(0) \overset{- \cdot \sigma}\lra \mathbf{M}(1) \overset{- \cdot \sigma}\lra \mathbf{M}(2) \overset{- \cdot \sigma}\lra \mathbf{M}(3) \overset{- \cdot \sigma}\lra \cdots.$$
In practice one may often take $\mathbf{M} = \mathbf{R}$ with its right $\mathbf{R}$-action, but it is clarifying to separate the two notions: it is then clear \cite[Remark 2.19]{Krannich} that one may as well replace $\mathbf{R}$ by $\mathbf{E}_2^+(1_*(*))$, the free unital $E_2$-algebra on a single point in grading 1, and just consider the induced $\mathbf{E}_2^+(1_*(*))$-module structure on $\mathbf{M}$.

Viewed in this way, the constructions and results of \cite{RWW, Krannich} beg to be explained from the point of view of an $\mathbf{E}_2^+(1_*(*))$-module cell-structure on $\mathbf{M}$. Our first 
main result does this: in Theorem \ref{thm:1} we will show that the cofibre of Krannich's \cite[\S 2.2]{Krannich} ``canonical resolution'' $|R_\bullet(\mathbf{M})| \to \mathbf{M}$ may be identified with the derived $\mathbf{E}_2^+(1_*(*))$-module indecomposables of $\gM$, so that the high-connectivity of the ``spaces of destabilisations" $|W_\bullet(A)|$ implies a vanishing line for the $\mathbf{E}_2^+(1_*(*))$-module cells of $\mathbf{M}$ (at least after linearising). This leads to a new proof that the high-connectivity of the $|W_\bullet(A)|$ implies homological stability, which we explain in Section \ref{sec:StabRevis}. It also has consequences for homology with twisted coefficients, and for representation stability.

Our second main result is particular to the set up of \cite{RWW}, where a braided monoidal groupoid $\mathsf{G}$ (satisfying certain axioms) yields an $E_2$-algebra $\gR \simeq B\mathsf{G}$. In this setting, for a fixed stabilising object $\sigma$ of $\mathsf{G}$ and each object $A$ of $\mathsf{G}$ there is the space $|W_\bullet(A)|$ of destabilisations of $A$, as well as spaces $|Z^{E_1}_{\bullet}(A)|$ and $|Z^{E_2}_{\bullet, \bullet}(A)|$ of ``$E_1$- and $E_2$-splittings of $A$''. Proposition \ref{prop:TransfConn} will show that under appropriate conditions the homological connectivities of these three spaces are essentially equivalent.

\vspace{2ex}

\noindent\textbf{Acknowledgements.} I would like to thank M.\ Krannich and A.\ Kupers for feedback on an earlier draft of this paper, and the referee for their perspicacious comments. I was supported by the ERC under the European Union's Horizon 2020 research and innovation programme (grant agreement No.\ 756444) and by a Philip Leverhulme Prize from the Leverhulme Trust.

\section{Recollections}

There is some tension in comparing \cite{Krannich} and \cite{e2cellsI}, because although they both deal with $E_2$-algebras and modules over them these notions are implemented in technically different ways. Namely, in \cite{Krannich} Krannich considers a 2-coloured operad $\mathcal{O}$ 
equivalent to a certain suboperad the Swiss cheese operad $\mathcal{SC}_2$, whose algebras $(\mathbf{M}, \mathbf{R})$ are then considered as an $E_1$-module $\mathbf{M}$ over an $E_2$-algebra $\mathbf{R}$. On the other hand \cite{e2cellsI} considers unital algebras $\mathbf{R}^+$ over the little 2-cubes operad, constructs a strictification $\overline{\mathbf{R}}$ of the underlying $E_1$-algebra to an associative monoid, and then considers modules $\mathbf{M}$ over this monoid. While Krannich's formulation is more elegant, to take advantage of the large amount of machinery already developed in \cite{e2cellsI} we find it necessary to work in that setting, and in Section \ref{sec:strictifying} we will redevelop Krannich's ideas in that setting.

As the results we explain are principally of interest in the context of \cite{e2cellsI} we will freely use the basic notation and concepts of that paper without introducing them again, and only remind the reader of the most pressing or elaborate notions.

\subsection{$\bN$-graded spaces}

We shall often work in the category $\mathsf{Top}^\bN = \mathsf{Fun}(\bN, \mathsf{Top})$ of $\bN$-graded (compactly generated weak Hausdorff) topological spaces, where $\bN$ is considered as a category with only identity morphisms. An object $X$ of this category simply consists of a collection $\{X(n)\}_{n \in \bN}$ of spaces. If $V$ is a space, we write $n_*(V)$ for the $\bN$-graded spaces which is $V$ in grading $n$ and empty otherwise.

We endow this category with the symmetric monoidal structure $\otimes$ given by Day convolution, with $\bN$ considered as a symmetric monoidal category under addition. More prosaically, it is given by
$$(X \otimes Y)(n) = \coprod_{a+b=n} X(a) \times Y(n).$$
Using $\otimes$ we can therefore talk of associative algebra objects in $\mathsf{Top}^\bN$, and of modules over them. We may also talk of $E_k$-algebras in this category, as explained in \cite{e2cellsI}.

\subsection{The associative algebra $\gS$}\label{sec:R}

Following \cite[Section 12.2.1]{e2cellsI} we use the following model for $\overline{\gE}_2(1_*(*))$, an associative unital monoid equivalent as an $E_1$-algebra to the free unital $E_2$-algebra on one generator in grading 1. Let $\mathcal{C}_2(n)$ denote the $n$th space in the little 2-cubes operad, i.e.\ the space of tuples $e_1, e_2, \ldots, e_n : I^2 \to I^2$ of rectilinear embeddings having disjoint interiors.

\begin{defn}
Let $\gS$ be the $\bN$-graded space with
$$\gS(n) = (0,\infty) \times \mathcal{C}_2(n)/\Sigma_n$$
for $n> 0$, and $\gS(0)$ given by a single point, considered as $\{(0, \emptyset)\}$. We think of $\gS(n)$ as the space of pairs of a $t > 0$ and a set of $n$ unordered rectilinear embeddings $I^2 \to [0,t] \times [0,1]$ with disjoint interiors, by the evident rescaling. In this interpretation, translation and disjoint union provide maps
$$\gS(n) \times \gS(m) \lra \gS(n+m)$$
making $\gS$ into an associative unital monoid in $\bN$-graded spaces, with unit $(0, \emptyset)$. We write $\sigma := (1, \mathrm{id}_{I^2}) \in \gS(1)$, or equivalently $\sigma : 1_*(*) \to \gS$.
\end{defn}

There is a homotopy equivalence between $\gS(n)$ and the space $C_n(\bR^2)$ of configurations of $n$ unordered points in the plane (by passing first to the subspace with $t=1$, then considering the map which sends a collection of embeddings $\{e_i : I^2 \to [0,1]^2\}$ to the collection of their centres $\{e_i(\tfrac{1}{2}, \tfrac{1}{2}) \in (0,1)^2 \cong \bR^2\}$, which is a fibration with contractible fibres). As such, $\gS(n)$ is a model for the classifying space of Artin's braid group $\beta_n$ on $n$ strands. The map $- \cdot \sigma : \gS(n-1) \to \gS(n)$ corresponds to the homomorphism $\beta_{n-1} \to \beta_n$ which adds one strand (to the right). We record two well-known facts about these maps:
\begin{enumerate}[(i)]
\item\label{it:BraidInj} The homomorphism $\beta_{n-1} \to \beta_n$ is injective for all $n$.

\item\label{it:stab} The homomorphism $\beta_{n-1} \to \beta_n$ induces an isomorphism on homology in degrees $* \leq \tfrac{n-3}{2}$, and an epimorphism in degrees $* \leq \tfrac{n-1}{2}$. Equivalently, the relative homology groups satisfy $H_*(\beta_n, \beta_{n-1} ; \bZ)=0$ for $* < \tfrac{n}{2}$.
\end{enumerate}
The latter was first proved by Arnold \cite{Arnold}, and there are many more recent proofs. The former is easy: the homomorphism lands in the subgroup $\beta_{n-1,1} \leq \beta_n$ of those braids where the strand that starts at the rightmost point also ends at the rightmost point, and on this subgroup there is a splitting $\beta_{n-1,1} \to \beta_{n-1}$ given by forgetting this rightmost strand.

\subsection{Strictifying Krannich's framework}\label{sec:strictifying}

Let $\gM$ be a right $\gS$-module, and let us define the analogue of Krannich's ``canonical resolution'' \cite[\S 2.2]{Krannich}. This is almost a semi-simplicial ($\bN$-graded) space augmented over $\gM$, but is indexed on a topological category $\widetilde{\Delta}_{\inj}$ homotopy equivalent to, but not equal to, $\Delta_{\inj}$. We must first describe this category. We will work with Moore paths, write $\omega(\gamma)$ for the end of a Moore path $\gamma$, and write $*$ for concatenation of Moore paths.

\begin{defn}
For $[q], [p] \in \Delta_{\inj}$ let $U([q], [p])$ denote the space of pairs of a $d \in \gS(p-q)$ and a Moore path $\mu$ in $\gS(p+1)$ from $\sigma^{p+1}$ to $d \cdot \sigma^{q+1}$. There is a composition-law
$$U([l], [q]) \times U([q], [p]) \lra U([l], [p])$$
given by $((e, \gamma), (d, \mu)) \mapsto (d \cdot e, \mu * (d \cdot \gamma))$, giving the structure of a topologically enriched category $U$ with the same objects as $\Delta_\inj$.

For a morphism $i : [q] \to [p] \in \Delta_{\inj}$ there is a path component $U([q], [p])$ containing the point given by $d=\sigma^{p-q}$ and $\mu$ a Moore loop corresponding to the braid on $(p+1)$ strands where the first $q+1$ strands go behind the rest to end at $i([q]) \subset [p]$, as in Figure \ref{fig:1}. We let $\widetilde{\Delta}_{\inj}([q], [p]) \subset U([q], [p])$ consist of such path components; one checks it defines a subcategory of $U$ with the same objects as $\Delta_\inj$.
\end{defn}

\begin{figure}[h]
\tikzset{every picture/.style={line width=0.75pt}} 

\begin{tikzpicture}[x=0.5pt,y=0.5pt,yscale=-1,xscale=1]

\draw   (278.8,12) -- (570,12) -- (445.2,70.25) -- (154,70.25) -- cycle ;
\draw   (278.8,149) -- (570,149) -- (445.2,209.75) -- (154,209.75) -- cycle ;
\draw  [fill={rgb, 255:red, 0; green, 0; blue, 0 }  ,fill opacity=1 ] (275.5,40.5) .. controls (275.5,37.74) and (277.74,35.5) .. (280.5,35.5) .. controls (283.26,35.5) and (285.5,37.74) .. (285.5,40.5) .. controls (285.5,43.26) and (283.26,45.5) .. (280.5,45.5) .. controls (277.74,45.5) and (275.5,43.26) .. (275.5,40.5) -- cycle ;
\draw  [fill={rgb, 255:red, 0; green, 0; blue, 0 }  ,fill opacity=1 ] (354.5,40.5) .. controls (354.5,37.74) and (356.74,35.5) .. (359.5,35.5) .. controls (362.26,35.5) and (364.5,37.74) .. (364.5,40.5) .. controls (364.5,43.26) and (362.26,45.5) .. (359.5,45.5) .. controls (356.74,45.5) and (354.5,43.26) .. (354.5,40.5) -- cycle ;
\draw  [fill={rgb, 255:red, 0; green, 0; blue, 0 }  ,fill opacity=1 ] (314.5,40.5) .. controls (314.5,37.74) and (316.74,35.5) .. (319.5,35.5) .. controls (322.26,35.5) and (324.5,37.74) .. (324.5,40.5) .. controls (324.5,43.26) and (322.26,45.5) .. (319.5,45.5) .. controls (316.74,45.5) and (314.5,43.26) .. (314.5,40.5) -- cycle ;
\draw  [fill={rgb, 255:red, 0; green, 0; blue, 0 }  ,fill opacity=1 ] (435.5,40.5) .. controls (435.5,37.74) and (437.74,35.5) .. (440.5,35.5) .. controls (443.26,35.5) and (445.5,37.74) .. (445.5,40.5) .. controls (445.5,43.26) and (443.26,45.5) .. (440.5,45.5) .. controls (437.74,45.5) and (435.5,43.26) .. (435.5,40.5) -- cycle ;
\draw  [fill={rgb, 255:red, 4; green, 0; blue, 0 }  ,fill opacity=1 ] (395.5,40.5) .. controls (395.5,37.74) and (397.74,35.5) .. (400.5,35.5) .. controls (403.26,35.5) and (405.5,37.74) .. (405.5,40.5) .. controls (405.5,43.26) and (403.26,45.5) .. (400.5,45.5) .. controls (397.74,45.5) and (395.5,43.26) .. (395.5,40.5) -- cycle ;
\draw  [fill={rgb, 255:red, 0; green, 0; blue, 0 }  ,fill opacity=1 ] (275,180.5) .. controls (275,177.74) and (277.24,175.5) .. (280,175.5) .. controls (282.76,175.5) and (285,177.74) .. (285,180.5) .. controls (285,183.26) and (282.76,185.5) .. (280,185.5) .. controls (277.24,185.5) and (275,183.26) .. (275,180.5) -- cycle ;
\draw   (315,180.75) .. controls (315,177.99) and (317.24,175.75) .. (320,175.75) .. controls (322.76,175.75) and (325,177.99) .. (325,180.75) .. controls (325,183.51) and (322.76,185.75) .. (320,185.75) .. controls (317.24,185.75) and (315,183.51) .. (315,180.75) -- cycle ;
\draw  [fill={rgb, 255:red, 0; green, 0; blue, 0 }  ,fill opacity=1 ] (355.43,179.75) .. controls (355.43,176.99) and (357.67,174.75) .. (360.43,174.75) .. controls (363.19,174.75) and (365.43,176.99) .. (365.43,179.75) .. controls (365.43,182.51) and (363.19,184.75) .. (360.43,184.75) .. controls (357.67,184.75) and (355.43,182.51) .. (355.43,179.75) -- cycle ;
\draw  [fill={rgb, 255:red, 0; green, 0; blue, 0 }  ,fill opacity=1 ] (395.5,179.75) .. controls (395.5,176.99) and (397.74,174.75) .. (400.5,174.75) .. controls (403.26,174.75) and (405.5,176.99) .. (405.5,179.75) .. controls (405.5,182.51) and (403.26,184.75) .. (400.5,184.75) .. controls (397.74,184.75) and (395.5,182.51) .. (395.5,179.75) -- cycle ;
\draw   (435.5,179.75) .. controls (435.5,176.99) and (437.74,174.75) .. (440.5,174.75) .. controls (443.26,174.75) and (445.5,176.99) .. (445.5,179.75) .. controls (445.5,182.51) and (443.26,184.75) .. (440.5,184.75) .. controls (437.74,184.75) and (435.5,182.51) .. (435.5,179.75) -- cycle ;
\draw    (319.5,40.5) .. controls (320.14,69.57) and (333,73.29) .. (341.29,78.71) ;
\draw    (440.5,40.5) .. controls (440,83.25) and (400,142.25) .. (400.5,179.75) ;
\draw    (280.5,40.5) .. controls (280.43,70.14) and (292.19,97.97) .. (303.86,125.57) ;
\draw    (359.5,40.5) .. controls (360.5,84.25) and (279.5,142.75) .. (280,180.5) ;
\draw    (360.43,180.75) .. controls (359.5,143.25) and (400,83.25) .. (400.5,40.5) ;
\draw    (308.43,133.86) .. controls (313.71,144.57) and (320.14,149.57) .. (320,175.75) ;
\draw    (347.14,82) .. controls (350.29,85.14) and (363.29,93.29) .. (381.29,102.14) ;
\draw    (421.71,116.71) .. controls (434.71,121.57) and (440.14,148.43) .. (439.5,174.75) ;
\draw    (385.86,104.14) .. controls (389.57,105.86) and (409.57,112.14) .. (414.86,114.57) ;

\end{tikzpicture}

	\caption{The braid $\mu$, where the points $i([q])$ are shown open.}
	\label{fig:1}
\end{figure}
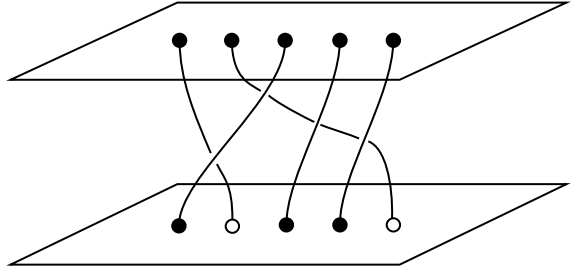

As in \cite[Lemma 2.11]{Krannich} the space $\widetilde{\Delta}_{\inj}([q], [p])$ is homotopy discrete, and the map
$${\Delta}_{\inj}([q], [p]) \lra \pi_0\widetilde{\Delta}_{\inj}([q], [p])$$
described above is a bijection. This yields a functor $\widetilde{\Delta}_{\inj} \to {\Delta}_{\inj}$ which is the identity on objects, and an equivalence on morphism spaces.

\begin{defn}
For a right $\gS$-module $\gM$, let $R_p(\gM)$ denote the $\bN$-graded space which in grading $n$ consists of pairs of a point $a \in \gM(n-p-1)$ and a Moore path $\gamma$ in $\gM(n)$ ending at $a \cdot \sigma^{p+1}$. Consider this as an enriched functor $\widetilde{\Delta}_{\inj}^{op} \to \mathsf{Top}$ via the maps
$$\widetilde{\Delta}_{\inj}([q], [p]) \times R_p(\gM) \lra R_q(\gM)$$
given by $((d, \mu), (a, \gamma)) \mapsto (a \cdot d, \gamma * (a \cdot \mu))$. Evaluating the Moore path at 0 gives an augmentation $R_\bullet(\gM) \to \gM$.
\end{defn}

One then sets
$$|R_\bullet(\gM)| := \hocolim_{[p] \in \widetilde{\Delta}_{\inj}^{op}} R_p(\gM)$$
by analogy with the geometric realisation of a semi-simplicial space. Krannich's development of homological stability in this setting takes as its axiom the high-connectivity of the map $\epsilon_\gM : |R_\bullet(\gM)| \to \gM$, specifically that there is a $k \geq 2$ such that this map is $\lfloor \tfrac{n-2+k}{k} \rfloor$-connected in grading $n$, for all $n$.

\section{The canonical resolution and module indecomposables}\label{sec:CanRes}

Our main result is the following, showing that the homotopy cofibre of the ``canonical resolution'' has a conceptual meaning: it is the derived $\gS$-module indecomposables. 

\begin{thm}\label{thm:1}
If $\gM$ is a right $\gS$-module, there is an equivalence of $\bN$-graded spaces between the homotopy cofibre of $\epsilon_\gM : |R_\bullet(\gM)| \to \gM$ and $Q_\bL^{\gS}(\gM)$.
\end{thm}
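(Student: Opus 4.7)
The strategy is to identify $|R_\bullet(\gM)|$ with a model for the derived tensor product $\gM \otimes^{\bL}_{\gR} \gR^+$, where $\gR^+ \subset \gR$ is the augmentation ideal (the sub-$\gR$-bimodule in positive gradings). The cofibre sequence of $\gR$-bimodules $\gR^+ \to \gR \to \bunit$ would then produce, after applying $\gM \otimes^{\bL}_{\gR}(-)$, the required cofibre sequence
\[
|R_\bullet(\gM)| \simeq \gM \otimes^{\bL}_{\gR} \gR^+ \lra \gM \lra \gM \otimes^{\bL}_{\gR} \bunit = Q_{\bL}^{\gR}(\gM).
\]

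Levelwise, $R_p(\gM)(n)$ is the Moore mapping path space of the structure map $-\cdot \sigma^{p+1} : \gM(n-p-1) \to \gM(n)$, so projecting off the contractible Moore-path factor yields a weak equivalence $R_p(\gM) \overset{\sim}\to \gM \otimes \{\mathrm{pt}\}_{p+1}$ intertwining the augmentation with $-\cdot \sigma^{p+1}$. Specialising to $\gM = \gR$, left multiplication endows $R_\bullet(\gR)$ with the structure of a $\widetilde{\Delta}_{\inj}^{op}$-diagram of left $\gR$-modules, each level of which is free of rank $1$. Since $R_p(\gR)(0) = \emptyset$ for all $p \geq 0$, the augmentation to $\gR$ factors through $\gR^+$, and the construction is functorial in $\gM$: there is a natural equivalence of $\widetilde{\Delta}_{\inj}^{op}$-diagrams $\gM \otimes_{\gR} R_\bullet(\gR) \overset{\sim}\to R_\bullet(\gM)$.

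The principal obstacle is to show the augmentation $|R_\bullet(\gR)| \to \gR^+$ is a weak equivalence of $\bN$-graded spaces. In grading $n$, this amounts to recovering $B\beta_n$ as the thick realisation of the $\widetilde{\Delta}_{\inj}^{op}$-indexed diagram whose value on $[p]$ is (the Moore mapping path space of) the stabilisation $B\beta_{n-p-1} \to B\beta_n$. My approach would be to interpret $R_\bullet(\gR)$ as a two-sided bar-type construction built from the single generator $\sigma$, in which the topologically thickened indexing category $\widetilde{\Delta}_{\inj}$ supplies precisely the braid coherences that the ordinary bar construction would otherwise manufacture from all of $\gR$; the crucial structural input is the homotopy discreteness of $\widetilde{\Delta}_{\inj}([q], [p])$ with $\pi_0 = \Delta_{\inj}([q], [p])$. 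Granted this equivalence, $R_\bullet(\gR)$ is a cofibrant resolution of $\gR^+$ by free left $\gR$-modules, and therefore $|R_\bullet(\gM)| = \gM \otimes_{\gR} |R_\bullet(\gR)| \simeq \gM \otimes^{\bL}_{\gR} \gR^+$, whence the theorem follows from the cofibre sequence above.
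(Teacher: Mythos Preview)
Your overall strategy matches the paper's exactly: identify $|R_\bullet(\gM)|$ with $\gM \otimes_\gR^\bL \gR_{>0}$ via the left $\gR$-module structure on $R_\bullet(\gR)$, then use the cofibre sequence $\gR_{>0} \to \gR \to \bunit$. The observation that $R_p(\gR)$ is a free left $\gR$-module on one generator in grading $p+1$, and that $\gM \otimes_\gR R_\bullet(\gR) \simeq R_\bullet(\gM)$, is also what the paper uses (though it routes the derived tensor through the two-sided bar $B(\gM,\gR,R_p(\gR))$ to be safe about cofibrancy).

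The genuine gap is the step you flag as ``the principal obstacle'': showing $|R_\bullet(\gR)| \overset{\sim}\to \gR_{>0}$. Your proposed argument---that $\widetilde{\Delta}_{\inj}$ formally supplies the braid coherences that an honest bar construction over $\gR$ would---is not a proof, and I do not believe it can be made into one without substantial geometric input. Concretely, the homotopy fibre of $\epsilon_\gR$ over a point of $\gR(n)$ is (after using injectivity of $\beta_{n-p-1} \hookrightarrow \beta_n$ to reduce to components) the realisation of a semi-simplicial set with $p$-simplices $\beta_n/\beta_{n-p-1}$ and face maps governed by the braid moves encoded in $\widetilde{\Delta}_{\inj}$. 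That this is contractible is not formal: it is the statement that a certain arc complex on an $n$-punctured disc is contractible, which is a theorem of Damiolini (see also Hatcher--Vogtmann). The paper explicitly names this as the key ingredient. Your sketch gives no indication of how you would reprove this, and the homotopy-discreteness of $\widetilde{\Delta}_{\inj}([q],[p])$ alone is far from sufficient---it only tells you the fibre \emph{is} the realisation of a semi-simplicial set, not that this set has contractible realisation. Without Damiolini's theorem or an equivalent, the argument does not close.
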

Let us write $\gS_{>0}$ for the sub-$\bN$-graded space of $\gS$ which is empty in grading 0 and agrees with $\gS$ otherwise. The following expands upon \cite[Example 2.18]{Krannich}.

\begin{lem}\label{lem:Damiolini}
The augmentation $\epsilon_\gS : |R_\bullet(\gS)| \to \gS$ is an equivalence onto $\gS_{>0}$.
\end{lem}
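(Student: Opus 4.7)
The strategy is to translate the statement, in each grading $n \geq 1$, into a contractibility claim about a certain semi-simplicial set of braid cosets, and then to identify that with the arc complex in Damiolini's theorem. In grading $0$ the statement is immediate: $R_p(\gR)(0)$ is empty for every $p \geq 0$ (it would require a point in $\gR(-p-1)$), so $|R_\bullet(\gR)|(0) = \emptyset = \gR_{>0}(0)$ and the augmentation is trivially an equivalence there.

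For $n \geq 1$, the key simplification is that each $R_p(\gR)(n)$ is homotopically rigid. The projection $(a,\gamma) \mapsto a$ is a fibration $R_p(\gR)(n) \to \gR(n-p-1)$ whose fibre over $a$ is the space of Moore paths in $\gR(n)$ ending at the fixed point $a \cdot \sigma^{p+1}$, which is contractible. Hence $R_p(\gR)(n) \simeq \gR(n-p-1)$, and under this equivalence the augmentation $(a,\gamma) \mapsto \gamma(0)$ is homotopic to iterated right multiplication by $\sigma$, i.e.\ the stabilisation map $\gR(n-p-1) \to \gR(n) \simeq B\beta_n$. Since $\beta_{n-p-1} \hookrightarrow \beta_n$ is injective by fact (i), the homotopy fibre of this stabilisation over the basepoint is the discrete coset set $\beta_n/\beta_{n-p-1}$.

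The next step is to pull the augmented semi-simplicial space back along the universal cover $E\beta_n \to B\beta_n$. This yields a $\beta_n$-equivariant augmented semi-simplicial set whose $p$-simplices form $\beta_n/\beta_{n-p-1}$, with face maps induced from the braid data in $\widetilde{\Delta}_{\inj}$. I would then identify this $\beta_n$-equivariant semi-simplicial set, up to equivariant equivalence, with the arc complex on the $n$-punctured disc $D_n$ whose $p$-simplices are isotopy classes of ordered $(p+1)$-tuples of disjoint arcs from a fixed boundary point to $p+1$ of the punctures. By Damiolini's theorem this arc complex is contractible, so the homotopy fibre of $\epsilon_\gR(n) : |R_\bullet(\gR)|(n) \to \gR(n)$ is contractible and the lemma follows for $n \geq 1$.

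The main obstacle is precisely this last identification: one must unwind the braid combinatorics encoded in the morphism spaces of $\widetilde{\Delta}_{\inj}$ to verify that the semi-simplicial set $[p] \mapsto \beta_n/\beta_{n-p-1}$, with its natural face maps, agrees $\beta_n$-equivariantly with the arc complex that Damiolini proves contractible, so that the face operators indeed correspond to the arc-forgetting maps. Everything else is soft; the geometric input is concentrated in that single step.
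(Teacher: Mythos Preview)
Your proposal is correct and follows essentially the same route as the paper: both arguments handle grading $0$ trivially, reduce the question in grading $n \geq 1$ to contractibility of the homotopy fibre of the augmentation, use injectivity of $\beta_{n-p-1} \hookrightarrow \beta_n$ to see that this fibre is (the realisation of) a discrete semi-simplicial set of braid cosets, identify it with the arc complex (the ``space of destabilisations'' of \cite{RWW}), and invoke Damiolini's theorem. The only cosmetic difference is that the paper takes literal fibres $F_\bullet(b)$ and passes to $\pi_0$ (handling the $\widetilde{\Delta}_{\inj} \to \Delta_{\inj}$ passage explicitly via the equivalence of enriched categories), whereas you phrase the same reduction as a pullback along the universal cover; your closing paragraph correctly flags the one point requiring care, and the paper dispatches it by citing \cite[Section 5.6.2]{RWW}.
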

\begin{proof}
The space $R_0(\gS)(0)$ consists of a point $a \in \gS(-1)$ and a Moore path to $a \cdot \sigma$, so is empty, and so the fibre of $\epsilon_\gS$ over the point of grading 0 is indeed empty.

The homotopy fibre of $\epsilon_\gS$ over $b \in \gS(n)$ with $n>0$ is, using \cite[p.\ 180]{Farjoun}, equivalent to the realisation of the $\widetilde{\Delta}_{\inj}$-space given by the homotopy fibres over $b$ of the maps $R_p(\gS) \to \gS$, but as these maps are fibrations this is in turn the same as the realisation of the $\widetilde{\Delta}_{\inj}$-space $F_\bullet(b)$ with $p$-simplices given by the literal fibres of these maps, i.e.\ an $a \in \gS(n-p-1)$ and a Moore path from $b$ to $a \cdot \sigma^{p+1}$. There are maps
$$\hocolim_{[p] \in \widetilde{\Delta}_{\inj}^{op}} F_p(b) \lra \hocolim_{[p] \in \widetilde{\Delta}_{\inj}^{op}} \pi_0 F_p(b) \lra \hocolim_{[p] \in {\Delta}_{\inj}^{op}} \pi_0 F_p(b)$$
induced by $F_\bullet(b) \to \pi_0 F_\bullet(b)$, and by the fact that the functor $\pi_0 F_\bullet(b)$ on $\widetilde{\Delta}_{\inj}^{op}$ canonically factors through $\widetilde{\Delta}_{\inj}^{op} \to {\Delta}_{\inj}^{op}$. The second map is an equivalence as $\widetilde{\Delta}_{\inj}^{op} \to {\Delta}_{\inj}^{op}$ is an equivalence of enriched categories. The first map is an equivalence as each $F_p(b)$ is homotopy-discrete: this is because it is a homotopy fibre of the map $\gS(n-p-1) \to \gS(n)$, and by item (\ref{it:BraidInj}) of Section \ref{sec:R} this is a map of $K(\pi, 1)$'s which is injective on fundamental groups.

We are left with needing to show that the semi-simplicial set $[p] \mapsto \pi_0 F_p(b)$ has contractible geometric realisation. For any $b$ this is the ``space of destabilisations'' of \cite[Definition 2.1]{RWW} in the case of the braid groups, which is described in \cite[Section 5.6.2]{RWW} as an arc complex. By a remarkable theorem of Damiolini \cite[Theorem 2.48]{Damiolini} (see \cite[Proposition 3.2]{HatcherVogtmann} for a published reference) this arc complex is contractible.
\end{proof}

\begin{proof}[Proof of Theorem \ref{thm:1}]
The augmented $\widetilde{\Delta}_{\inj}$-space $R_\bullet(\gS) \to \gS$ is constructed using the right $\gS$-module structure on $\gS$, so it admits a compatible left $\gS$-module structure
via
$$(b, (a, \gamma)) \longmapsto (b\cdot a, b \cdot \gamma): \gS \otimes R_p(\gS) \lra R_p(\gS).$$
Furthermore, contracting the Moore path gives a deformation retraction from $R_p(\gS)$ to the subspace where the Moore path is trivial, and this subspace is isomorphic to $\gS \otimes (p+1)_*(*)$ as a $\bN$-graded space, and as a left $\gS$-module. 

There is a map
$$\phi_p' : \gM \otimes R_p(\gS) \lra R_p(\gM)$$
given by $(a, (b, \gamma)) \mapsto (a \cdot b, a \cdot \gamma)$. If $c \in \gS$ then the map above satisfies $\phi_p'(a \cdot c, b) = \phi_p'(a, c \cdot b)$, and hence descends to a map $\phi_p : \gM \otimes_\gS R_p(\gS) \to R_p(\gM)$ from the coequaliser. The composition
$$B(\gM, \gS, R_p(\gS)) \lra \gM \otimes_\gS R_p(\gS) \overset{\phi_p}\lra R_p(\gM)$$
of the augmentation map and $\phi_p$ is an equivalence, using $R_p(\gM) \simeq \gM\otimes (p+1)_*(*)$ as well as $R_p(\gS) \simeq \gS\otimes (p+1)_*(*)$ and $B(\gM, \gS, \gS) \simeq \gM$. By commuting the bar construction with the homotopy colimit defining $|R_\bullet(\gS)|$, and using Lemma \ref{lem:Damiolini}, we obtain equivalences
$$B(\gM, \gS, \gS_{>0}) \overset{\sim}\longleftarrow B(\gM, \gS, |R_\bullet(\gS)|) \overset{\sim}\lra |R_\bullet(\gM)|$$
over $\gM$. This identifies the homotopy cofibre of $\epsilon_\gM$ with the homotopy cofibre of the composition
$$B(\gM, \gS, \gS_{>0}) \lra B(\gM, \gS, \gS) \overset{\sim}\lra \gM,$$
which is equivalent to the homotopy cofibre of the first map, i.e.\ $B(\gM, \gS, \gS/\gS_{>0}) \simeq B(\gM, \gS, \bunit)$. The latter bar construction should be interpreted as being formed in $\bN$-graded pointed spaces, with $\gM$ and $\gS$ included in this category by implicitly adding disjoint basepoints, and with $\bunit \simeq \gS/\gS_{>0}$ given by
$$\bunit(n) = \begin{cases}
S^0 & n=0\\
* & n>0.
\end{cases}$$
By \cite[Corollary 9.17]{e2cellsI} $B(\gM, \gS, \bunit)$ agrees with $Q_\bL^{\gS}(\gM)$, as long as $\gS$ and $\gM$ are cofibrant in $\mathsf{Top}^\bN$.

Now each $\gS(n)$ has the structure of a smooth manifold with corners, so is cofibrant in $\mathsf{Top}$; on the other hand the cofibrancy hypotheses in $\gM$ may be neglected, for the following reason. If $\gM^c \overset{\sim}\to \gM$ is a cofibrant replacement of $\gM$ as a $\gS$-module (and so in particular $\gM^c$ is cofibrant in $\mathsf{Top}$) then the above applies to give $B(\gM^c, \gS, \bunit) \simeq Q^{\gS}_\bL(\gM^c)$. Now certainly $Q^{\gS}_\bL(\gM^c) \to Q^{\gS}_\bL(\gM)$ is an equivalence, but also $B_\bullet(\gM^c, \gS, \bunit) \to B_\bullet(\gM, \gS, \bunit)$ is a levelwise equivalence (cartesian product preserves equivalences between \emph{all} objects in $\mathsf{Top}$) and so $B(\gM^c, \gS, \bunit) \to B(\gM, \gS, \bunit)$ is an equivalence too (geometric realisation of semi-simplicial spaces preserves equivalences between \emph{all} semi-simplicial (compactly-generated) spaces \cite[Theorem 2.2]{ERWSx}).
\end{proof}

\section{Classical homological stability revisited}\label{sec:StabRevis}

Theorem \ref{thm:1} leads to a new proof of homological stability in the setting of \cite{Krannich} or \cite{RWW} (adapted as in Section \ref{sec:strictifying}), quite different from the standard proof but very similar in spirit to \cite[\S 18]{e2cellsI}. It takes as given homological stability (of slope $\tfrac{1}{2}$) for the free $E_2$-algebra on one generator, i.e.\ configuration spaces of little cubes (or points) in the plane, or equivalently the braid groups.

In the terms we have been using, homological stability may be formulated as follows. Firstly, if $\gX$ is an $\bN$-graded space then we define bigraded homology groups by $H_{n,d}(\gX) := H_d(\gX(n))$, and similarly reduced homology groups of $\bN$-graded pointed spaces. Secondly, if $\gM$ is a right $\gS$-module then we may form the composition
$$- \cdot \sigma : \gM \otimes 1_*(*) \overset{\gM \otimes \sigma}\lra \gM \otimes \gS \overset{\cdot}\lra \gM$$
in the category of $\bN$-graded spaces, and write $\gM/\sigma$ for its homotopy cofibre (considered as a $\bN$-graded pointed space). Using that $(\gM \otimes 1_*(*))(n) = \gM(n-1)$, the associated long exact sequence on homology takes the form
$$\cdots \lra \widetilde{H}_{n,d+1}(\gM/\sigma) \lra H_d(\gM(n-1)) \overset{(- \cdot\sigma)_*}\lra H_d(\gM(n)) \lra \widetilde{H}_{n,d}(\gM/\sigma) \lra \cdots.$$
Thus homological stability of the sequence of maps
\begin{equation}\label{eq:StabSeq}
\mathbf{M}(0) \overset{- \cdot \sigma}\lra \mathbf{M}(1) \overset{- \cdot \sigma}\lra \mathbf{M}(2) \overset{- \cdot \sigma}\lra \mathbf{M}(3) \overset{- \cdot \sigma}\lra \cdots
\end{equation}
corresponds to the vanishing of the groups $\widetilde{H}_{n,d}(\gM/\sigma)$ for $d \ll n$.

Henceforth $\bk$ will always denote a commutative ring.

\begin{prop}\label{prop:homstab}
Let $\gM$ be a right $\gS$-module and $f : \bN \to \bN$ be such that
$$H_{n,d}(\gM, |R_\bullet(\gM)| ; \bk )=0 \text{ for } d < f(n).$$
Then, setting $\bar{f}(n) := \min \{\lfloor f(p) + \tfrac{1}{2}(n-p)\rfloor \,|\, 0 \leq p \leq n\}$, we have
$$\widetilde{H}_{n,d}(\gM/\sigma ; \bk)=0 \text{ for } d < \bar{f}(n).$$
\end{prop}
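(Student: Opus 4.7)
The plan is to use Theorem~\ref{thm:1} to translate the hypothesis into a vanishing line for the derived $\gR$-module indecomposables of $\gM$, and then to run a cell-attachment argument in the spirit of \cite[\S 18]{e2cellsI}, taking Arnold's theorem on the braid groups as the geometric input.

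By Theorem~\ref{thm:1} the hypothesis rewrites as $H_{n,d}(Q^\gR_\bL(\gM);\bZ) = 0$ for $d<f(n)$. The target $\gM/\sigma$ admits a derived-tensor-product description: letting $A_\sigma\subseteq \gR$ denote the associative sub-algebra generated by $\sigma$, the cofibre of $-\cdot\sigma:\gM\to\gM$ is $\gM\otimes^\bL_{A_\sigma}\bunit$, and change of rings along $A_\sigma\hookrightarrow \gR$ gives
\[ \gM/\sigma \;\simeq\; \gM\otimes^\bL_\gR (\gR/\sigma), \]
where $\gR/\sigma := \gR\otimes^\bL_{A_\sigma}\bunit$ is the left $\gR$-module whose grading-$k$ component is the homotopy cofibre of $(-\cdot\sigma):\gR(k-1)\to\gR(k)$.

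Using the reformulated hypothesis I would pick a CW replacement $\gM^c\overset{\sim}\to\gM$ of $\gM$ as a right $\gR$-module whose cells all lie in bidegrees $(n_\alpha,d_\alpha)$ with $d_\alpha\geq f(n_\alpha)$; such a minimal model exists by the standard Hurewicz/minimal-cell argument in the model category of right $\gR$-modules, using the cell-attachment machinery of \cite[\S 6 and \S 11]{e2cellsI}. Since $-\otimes^\bL_\gR(\gR/\sigma)$ preserves homotopy colimits, the skeletal filtration of $\gM^c$ induces a filtration of $\gM/\sigma$ with associated spectral sequence
\[ E^1_{n,d} \;=\; \bigoplus_\alpha \tilde H_{d-d_\alpha}\bigl((\gR/\sigma)(n-n_\alpha);\bZ\bigr) \;\Longrightarrow\; H_{n,d}(\gM/\sigma;\bZ). \]
Item~(\ref{it:stab}) of Section~\ref{sec:R} (Arnold's theorem) provides $\tilde H_j((\gR/\sigma)(k);\bZ)=0$ for $j<k/2$, so each summand vanishes whenever $d < d_\alpha + \tfrac{1}{2}(n-n_\alpha)$. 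Combined with $d_\alpha\geq f(n_\alpha)$ and ranging over $0\leq n_\alpha\leq n$, the entire $E^1$-page vanishes in bidegree $(n,d)$ as soon as $d<f(n')+\tfrac{1}{2}(n-n')$ holds for every $0\leq n'\leq n$, which for integer $d$ is implied by $d<\bar f(n)$. The hardest step will be the CW-replacement: one must ensure it can actually be taken with cells confined to the claimed bidegrees when working integrally (rather than over a field, where the cells correspond directly to a homology basis of $Q^\gR_\bL(\gM)$); this is handled by the cell-attachment technology of \cite{e2cellsI} but warrants some care, and the floor-function bookkeeping must be tracked to recover the stated $\bar f$ on the nose.
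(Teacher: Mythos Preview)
Your proposal is correct and follows essentially the same approach as the paper: translate the hypothesis via Theorem~\ref{thm:1} into a vanishing line for $\gR$-module indecomposables, build a cellular approximation with cells obeying that line, apply $(-)/\sigma \simeq (-)\otimes_{\gR}\gR/\sigma$ to the skeletal filtration, and feed in Arnold's stability for braid groups. The paper resolves your anticipated difficulty with the CW-replacement by first linearising via $\bZ[\mathrm{Sing}_\bullet(-)]$ to land in $\mathsf{sMod}_\bZ^\bN$ and then invoking \cite[Theorem 11.21]{e2cellsI} directly; your detour through the sub-algebra $A_\sigma$ is not needed, since $(-)/\sigma = (-)\otimes_{\gR_\bZ}\gR_\bZ/\sigma$ is immediate.
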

In particular, if $f$ diverges then so does $\bar{f}$, i.e.\ \eqref{eq:StabSeq} satisfies homological stability.

\begin{proof}[Proof of Proposition \ref{prop:homstab}]
By Theorem \ref{thm:1}, the hypothesis of the proposition is equivalent to $H^{\gS}_{n,d}(\gM ; \bk)=0$ for $d < f(n)$. Applying the symmetric monoidal functor $(-)_\bk := \bk[\mathrm{Sing}_\bullet(-)] : \mathsf{Top} \to \mathsf{sMod}_\bk$ we obtain an associative monoid $\gS_\bk$ and a module $\gM_\bk$ over it in the category $\mathsf{sMod}_\bk^\bN$, satisfying $H^{\gS_\bk}_{n,d}(\gM_\bk)=0$ for $d < f(n)$. By \cite[Theorem 11.21]{e2cellsI} we may find a $\gS_\bk$-module cellular approximation $\gC \overset{\sim}\to \gM_\bZ$, such that $\gC$ only has $(n,d)$-cells with $d \geq f(n)$. We write $\mathrm{sk}(\gC)$ for the filtered $\gS_\bk$-module given by the skeletal filtration of $\gC$.

By considering the functor $(-)/\sigma = (-) \otimes_{\gS_\bk} \gS_\bk/\sigma$, which preserves homotopy cofibre sequences of right $\gS_\bk$-modules, we obtain a filtration of $\gC/\sigma$ with associated graded
$$\mathrm{gr}(\gC/\sigma) \simeq \mathrm{gr}(\gC)/\sigma \simeq \bigoplus_{d \geq 0} \bigoplus_{\alpha \in I_d} S^{n_\alpha, d} \otimes \gS_\bZ/\sigma,$$
where $d \geq f(n_\alpha)$ for $\alpha \in I_d$.

By the discussion in Section \ref{sec:R} we have $H_{n,d}(\gS_\bk/\sigma) \cong H_d(\beta_n, \beta_{n-1};\bk)$ for $\beta_n$ the $n$th braid group, and by item (\ref{it:stab}) of Section \ref{sec:R} and the Universal Coefficient Theorem this vanishes for $d < \tfrac{n}{2}$. It follows that the homology of $\mathrm{gr}(\gC/\sigma)$ vanishes in bidegrees $(n,d)$ such that $d < f(n_\alpha) + \tfrac{n-n_\alpha}{2}$ for all cells $\alpha$, so in particular for $d < \bar{f}(n)$. The same then holds for $\gC/\sigma \simeq \gS_\bZ/\sigma$ by the spectral sequence for the skeletal filtration of $\gC/\sigma$.
\end{proof}

This is a simple application of Theorem \ref{thm:1}, using only that $\gS$ enjoys homological stability of slope $\tfrac{1}{2}$ with integral coefficients. But the principle behind the argument above shows that $\gM$ will enjoy any homological stability pattern that $\gS$ does, in a range of degrees controlled by the vanishing of $H_{*,*}(\gM, |R_\bullet(\gM)|)$. (Of course this is only useful when the latter has a vanishing line of slope $> \tfrac{1}{2}$: Coxeter groups \cite[Section 8]{HepworthCoxeter} and Artin monoids \cite[Theorem 8.1]{BoydArtin} give good families of examples.) As the homology of $\gS$ is completely known, such patterns (meaning improved homological stability ranges with $\bQ$- or $\bF_p$-coefficients, or secondary and higher homological stability) can be easily analysed. A detailed analysis is given in \cite[Corollary 2.12]{Himes}: we will not spell out the (rather involved) formulation here.

A converse to Proposition \ref{prop:homstab} holds too: 

\begin{prop}\label{prop:homstabConverse}
Let $\gM$ be a right $\gS$-module, and $g : \bN \to \bN$ be such that
$$\widetilde{H}_{n,d}(\gM/\sigma ; \bk)=0 \text{ for } d < g(n).$$
Then, setting $\bar{g}(n) := \min \{g(p) + (n-p) \, | \, 0 \leq p \leq n\}$, we have
\[H_{n,d}(\gM, |R_\bullet(\gM)| ; \bk)=0 \text{ for } d < \bar{g}(n).\makeatletter\displaymath@qed\]
\end{prop}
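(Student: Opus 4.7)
By Theorem \ref{thm:1}, $H_{n,d}(\gM, |R_\bullet(\gM)|;\bZ)$ is identified with $H_{n,d}(Q^\gR_\bL(\gM);\bZ)$, so the task is to show this vanishes for $d<\bar{g}(n)$. As in the proof of Proposition \ref{prop:homstab}, I would apply $\bZ[\mathrm{Sing}_\bullet(-)]$ to reduce to the linearised setting $\mathsf{sMod}_\bZ^\bN$, replacing $\gR$ and $\gM$ by $\gR_\bZ$ and $\gM_\bZ$; the hypothesis and conclusion are both preserved.

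The main input is two cofibre sequences of left $\gR_\bZ$-modules, both obtained by applying the octahedral axiom to the factorisations of $\gR_\bZ \xrightarrow{\cdot\sigma} \gR_\bZ$ through $\gR_{\bZ,>0}$ and of the augmentation $\gR_\bZ \twoheadrightarrow \bunit$ through $\gR_\bZ/\sigma$:
\[
(\gR_\bZ/\sigma)_{>0} \lra \gR_\bZ/\sigma \lra \bunit
\quad\text{and}\quad
\gR_\bZ \otimes 1_*(1) \xrightarrow{\cdot\sigma} \gR_{\bZ,>0} \lra (\gR_\bZ/\sigma)_{>0},
\]
where $(\gR_\bZ/\sigma)_{>0}$ is the sub-$\gR_\bZ$-module of positive $\bN$-grading. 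Tensoring each with $\gM_\bZ$ over $\gR_\bZ$ derivedly, and using that $\gM_\bZ \otimes^\bL_{\gR_\bZ}(\gR_\bZ \otimes 1_*(1)) \simeq \gM_\bZ[1]$ is the grade-shift by one (since $\gR_\bZ \otimes 1_*(1)$ is free), produces two long exact sequences connecting $\gM_\bZ/\sigma$, $\gM_\bZ$, $Q^{\gR_\bZ}_\bL(\gM_\bZ)$, and the auxiliary terms $\gM_\bZ\otimes^\bL_{\gR_\bZ}(\gR_\bZ/\sigma)_{>0}$ and $\gM_\bZ\otimes^\bL_{\gR_\bZ}\gR_{\bZ,>0}$.

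I would prove the claim by induction on $n$, using the recursion $\bar{g}(n)=\min(g(n),\bar{g}(n-1)+1)$: when $d<\bar{g}(n)$ we simultaneously have $d<g(n)$---so that $H_{n,d}(\gM_\bZ/\sigma)$ vanishes by hypothesis---and $d-1<\bar{g}(n-1)$---so that the inductive bound on $Q^{\gR_\bZ}_\bL(\gM_\bZ)$ at grading $n-1$ applies after the grade-shift. The slope-$1$ term $\bar{g}(n-1)+1$ arises precisely from the grade-shift $\gM_\bZ[1]$ in the second cofibre sequence, since $\cdot\sigma$ raises $\bN$-grading by one without changing internal degree. This is a different source of slope than the slope-$\tfrac{1}{2}$ of braid-group stability used in Proposition \ref{prop:homstab}.

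\textbf{Main obstacle.} Naively chasing the two long exact sequences runs into circularity: bounding the auxiliary fibre $\gM_\bZ\otimes^\bL_{\gR_\bZ}(\gR_\bZ/\sigma)_{>0}$ at grading $n$ requires, via the defining fibre sequence of $\gM_\bZ\otimes^\bL_{\gR_\bZ}\gR_{\bZ,>0}$, the bound on $Q^{\gR_\bZ}_\bL(\gM_\bZ)$ at grading $n$---the very statement one wants to prove. To break this I would first establish the weaker auxiliary bound $H_{n,d}(\gM_\bZ;\bZ)=0$ for $d<\underline{g}(n):=\min_{n'\leq n}g(n')$ by a direct induction on $n$ using the $\sigma$-action long exact sequence $H_{n-1,d}(\gM_\bZ)\xrightarrow{\cdot\sigma}H_{n,d}(\gM_\bZ)\to H_{n,d}(\gM_\bZ/\sigma)$, and then verify that this slope-$0$ estimate on $\gM_\bZ$ itself suffices to decouple the interlocking sequences while preserving the sharper slope-$1$ conclusion for $Q^{\gR_\bZ}_\bL(\gM_\bZ)$.
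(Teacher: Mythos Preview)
Your two cofibre sequences are correct, and you have correctly identified the circularity: bounding $H_{n,d-1}(G)$ via the second sequence requires bounding $H_{n,d-1}(F)$, which via $F \to \gM_\bZ \to Q$ requires the vanishing of $H_{n,d}(Q)$ \emph{at the same bidegree} you are trying to establish. The trouble is that your proposed fix does not break this loop. The auxiliary bound $H_{n,d}(\gM_\bZ)=0$ for $d<\underline{g}(n)=\min_{n'\leq n}g(n')$ is in general far weaker than what is needed: take $g(n)=n$, so that $\underline{g}\equiv 0$ is vacuous while $\bar g(n)=n$. In that example, chasing your long exact sequences at $(n,d)=(2,1)$ shows that (using $H_{2,1}(\gM/\sigma)=0$, $H_{2,0}(Q)=0$, and that the composite $H_{1,0}(\gM)\to H_{2,0}(F)\to H_{2,0}(\gM)$ is the isomorphism $\sigma_*$) one obtains a splitting $H_{2,0}(F)\cong H_{2,1}(Q)\oplus H_{2,0}(\gM)$ and hence $H_{2,0}(G)\cong H_{2,1}(Q)$, with the boundary map $H_{2,1}(Q)\hookrightarrow H_{2,0}(G)$ becoming the identity under this identification. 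So the diagram chase returns only the tautology $H_{2,1}(Q)\cong H_{2,1}(Q)$ and gives no vanishing. The point is that your pair of cofibre sequences captures a single step of the bar filtration of $\bunit$ as an $\gR_\bZ$-module, whereas the desired slope-$1$ bound genuinely needs the entire filtration.

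The paper's argument does exactly this: it runs the Bousfield--Kan spectral sequence for the augmented $\widetilde{\Delta}_{\inj}^{op}$-space $R_\bullet(\gM)\to\gM$, which has $E^1_{n,p,q}\cong H_q(\gM(n-p-1))$ for $p\geq -1$, with $d^1$ equal to the alternating sum of $(p+1)$ copies of $\sigma_*$, hence $0$ for $p$ odd and $\sigma_*$ for $p$ even. Thus each $E^2_{n,p,q}$ is a kernel or cokernel of a single stabilisation map and is controlled directly by the hypothesis on $H_*(\gM/\sigma)$, giving $E^2_{n,p,q}=0$ for $p+q+1<\bar g(n)$. This is the ``usual homological stability spectral sequence with the logic reversed''. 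To repair your approach you would need to iterate your cofibre sequences over the whole bar tower, at which point you recover precisely this spectral sequence.
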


In practice this is not usually sharp, in that $H_{*,*}(\gM, |R_\bullet(\gM)| ; \bk)$ often vanishes with larger slope than $\widetilde{H}_{*,*}(\gM/\sigma ; \bk)$ does. As mentioned above, this usually indicates the presence of secondary and higher order homological stability for $\gS$.

In view of Theorem \ref{thm:1}, a highbrow proof of Proposition \ref{prop:homstabConverse} is the discussion in \cite[Remark 19.3]{e2cellsI}, allowing oneself to be more flexible with the form of the stability ranges. A middlebrow proof is to consider the Bousfield--Kan spectral sequence for the augmented $\widetilde{\Delta}_{\inj}^{op}$-space $\epsilon_\gM : R_\bullet(\gM) \to \gM$, which---as the morphism spaces in $\widetilde{\Delta}_{\inj}^{op}$ are homotopy discrete---takes the form
$$E^1_{n,p,q} = H_{n,q}(R_p(\gM);\bk) \Longrightarrow H_{n, p+q+1}(\gM, |R_\bullet(\gM)| ; \bk)$$
for $p \geq -1$ with $R_{-1}(\gM) := \gM$. As $R_p(\gM) \simeq \gM \otimes (p+1)_*(*)$ we can write the $E^1$-page as $E^1_{n,p,q} \cong H_{q}(\gM(n-p-1);\bk)$, and recognise the $d^1$-differential $d^1 : E^1_{n,p,q} \to E^1_{n,p-1,q}$ as the alternating sum of $p+1$ copies of the stabilisation map $(- \cdot \sigma)_*$. Thus this differential is zero if $p$ is odd, and is $(-\cdot\sigma)_*$ if $p$ is even. From the assumption it is then easy to see that $E^2_{n,p,q}=0$ for $p+q+1 < \bar{g}(n)$. This is simply the usual spectral sequence argument for homological stability, with the logic reversed. (As with many middlebrow arguments, it even offers a slight improvement: in the definition of $\bar{g}$ one can take the minimum over those $0 \leq p \leq n$ having the same parity as $n$.)

\section{An extension of Theorem \ref{thm:1}}\label{sec:RepStab}

The discussion of Section \ref{sec:CanRes} shows that the cofibre of the canonical resolution $\epsilon_\gM : |R_\bullet(\gM)| \to \gM$ is equivalent to the derived $\gS$-module indecomposables $Q^\gS_\bL(\gM)$, so that the high-connectivity of this cofibre means that $\gM$ can be constructed as a cellular $\gS$-module without using small-dimensional $\gS$-module cells in large $\bN$-grading. Usually, such high-connectivity is proved by establishing the high-connectivity of the fibres of $\epsilon_\gM$: the fibre $W_\bullet(m)$ of $\epsilon_\gM : R_\bullet(\gM) \to \gM$ over a point $m \in \gM$ is called the ``space of destabilisations'' in \cite[Definition 2.14 (ii)]{Krannich}. The high-connectivity of a fibre is, of course, stronger than the high-connectivity of the corresponding cofibre. As such it might be expected that the high-connectivity of the fibres $W_\bullet(m)$ has more consequences than the high-connectivity of $Q^\gS_\bL(\gM)$. The goal of this section is to explain how this is so.

\subsection{Formulation}\label{sec:setup}
The theory in \cite{e2cellsI} is developed not only in the category $\mathsf{Top}^\bN$ of $\bN$-graded spaces, but more generally in $\mathsf{G}$-graded spaces for a (symmetric or braided) monoidal groupoid $\mathsf{G}$. This allows for the treatment of homological stability with twisted coefficients, and is also the natural context for representation stability. 

Let $(\mathsf{G}, \oplus, b, 0)$ be a braided monoidal groupoid. Let $r : \mathsf{G} \to \bN$ be a strong monoidal functor, called the \emph{rank}, and choose an $X \in \mathsf{G}$ with $r(X)=1$. Assume furthermore that 
\begin{enumerate}[(I)]
\item\label{it:NoRk0} $0 \in \mathsf{G}$ is the only object of rank 0, and

\item\label{it:TrivAut0} $\mathrm{Aut}_\mathsf{G}(0)$ is trivial.
\end{enumerate}
Endow $\mathsf{Top}^\mathsf{G} = \mathsf{Fun}(\mathsf{G}, \mathsf{Top})$ with the braided monoidal structure given by Day convolution, and similarly $\mathsf{sMod}^\mathsf{G}_\bZ$. 

In order to discuss $E_2$-algebras in a category which is only braided monoidal, in \cite[Section 4.1]{e2cellsI} there is introduced the category $\mathsf{FB}_2$ of ``braided finite sets'', and the category $\mathsf{Top}^{\mathsf{FB}_2}$ replaces the category of symmetric sequences. It is endowed with a composition product \cite[Definition 4.3]{e2cellsI}, monoids for which serve as a braided version of operads. In particular there is a braided version $\mathcal{C}^{\mathsf{FB}_2}_2$ of the non-unitary little 2-cubes operad \cite[Definition 12.6]{e2cellsI}. This has $\mathcal{C}^{\mathsf{FB}_2}_2(n)$ contractible for each $n>0$ (and empty for $n=0$).

Using this we can make sense of $E_2$-algebras in $\mathsf{Top}^\mathsf{G}$ or $\mathsf{sMod}^\mathsf{G}_\bZ$, and in particular we can form the free $E_2$-algebra on the object $X_*(*) \in \mathsf{Top}^\mathsf{G}$,
$$\mathbf{E}_2(X_*(*)) \in \mathsf{Alg}_{E_2}(\mathsf{Top}^\mathsf{G}).$$
We can strictify $\mathbf{E}_2(X_*(*))$ to a unital associative algebra 
$$\widetilde{\gS} := \overline{\mathbf{E}}_2(X_*(*)),$$
which plays the role of $\gS$ in this setting, and consider a right $\widetilde{\gS}$-module $\widetilde{\gM}$. The object $\widetilde{\gS}$ is cofibrant in $\mathsf{Top}^\mathsf{G}$, and we will always assume that $\widetilde{\gM}$ is too.

Taking left Kan extension along $r : \mathsf{G} \to \bN$ gives
$$r_* \widetilde{\gS} = r_* \overline{\gE}_2(X_*(*)) = \overline{\gE}_2(1_*(*)) = \gS \quad\text{ and }\quad r_* \widetilde{\gM} =: \gM,$$
(as these objects were cofibrant in $\mathsf{Top}^\mathsf{G}$, this agrees with the homotopy left Kan extension) and $\gM$ is a right $\gS$-module. For each object $Y \in \mathsf{G}$ there is a quotient map $q : \widetilde{\gM}(Y) \to \gM(r(Y))$. This puts us in the setting of Section \ref{sec:strictifying}: there is the canonical resolution $\epsilon_\gM : R_\bullet(\gM) \to \gM$, with fibre $W_\bullet(m)$ over $m \in \gM$.

The following relates the spaces $|W_\bullet(m)|$, in particular their connectivities, with the derived $\widetilde{\gS}$-module indecomposables.

\begin{thm}\label{thm:repstab}
Let $\widetilde{\gM}$ be a right $\widetilde{\gS}$-module which is cofibrant in $\mathsf{Top}^\mathsf{G}$. Then there is a morphism
\begin{equation}\label{eq:PreKanExt}
B(\widetilde{\gM}, \widetilde{\gS}, \widetilde{\gS}_{>0}) \lra \widetilde{\gM}.
\end{equation}
with homotopy cofibre $Q_\bL^{\widetilde{\gS}}(\widetilde{\gM})$ and homotopy fibre over $\tilde{m} \in \widetilde{\gM}$ given by $|W_\bullet(q(\tilde{m}))|$.

In particular, if $|W_\bullet(q(\tilde{m}))|$ is $k$-connected for all $\tilde{m} \in \widetilde{\gM}(Y)$, then $Q_\bL^{\widetilde{\gS}}(\widetilde{\gM})(Y)$ is $(k+1)$-connected.
\end{thm}

\subsection{Proof of Theorem \ref{thm:repstab}}\label{sec:Pfthm:repstab}

We prove this theorem by analogy with Theorem \ref{thm:1}, and so first construct an augmented $\widetilde{\Delta}_\inj^{op}$-object $R_\bullet(\widetilde{\gM}) \to \widetilde{\gM}$. For each object $Y \in \mathsf{G}$ and each $[p] \in \widetilde{\Delta}_\inj^{op}$ we define $R_p(\widetilde{\gM})(Y)$ by the cartesian square
\begin{equation}\label{eq:CartSq}
  \begin{tikzcd}
R_p(\widetilde{\gM})(Y) \dar \rar& \widetilde{\gM}(Y) \dar{q}\\
R_p(\gM)(r(Y)) \rar & \gM(r(Y)).
  \end{tikzcd}
\end{equation}
Repeatedly using the universal property of pullbacks, we see that these assemble to $R_p(\widetilde{\gM}) \in \mathsf{Top}^\mathsf{G}$, and that in turn these assemble to an augmented $\widetilde{\Delta}_\inj^{op}$-object $R_\bullet(\widetilde{\gM}) \to \widetilde{\gM}$ in $\mathsf{Top}^\mathsf{G}$. Furthermore, when $\widetilde{\gM} = \widetilde{\gS}$ we see that this object consists of left $\widetilde{\gS}$-modules.

As $\widetilde{\gM}$ is assumed to be cofibrant, the quotient map $\widetilde{\gM}(Y) \to \widetilde{\gM}(Y)/\mathrm{Aut}_\mathsf{G}(Y)$ is a covering space and the latter is a union of path-components of $\gM(r(Y))$, so the right-hand vertical map in \eqref{eq:CartSq} is a fibration: thus this square is also homotopy cartesian. It then follows that the square
\begin{equation}\label{eq:CartSq2}
  \begin{tikzcd}
{|R_\bullet(\widetilde{\gM})|(Y)} \dar \rar& \widetilde{\gM}(Y) \dar{q}\\
{|R_\bullet(\gM)|(r(Y))} \rar & \gM(r(Y))
  \end{tikzcd}
\end{equation}
is also homotopy cartesian (\cite[Lemma 2.13]{ERWSx} gives this for $\Delta_\inj^{op}$-objects: it follows for $\widetilde{\Delta}_\inj^{op}$-objects by first homotopy Kan extending along the equivalence of enriched categories $\widetilde{\Delta}_\inj^{op} \to \Delta_\inj^{op}$).

If we let $\widetilde{\gS}_{>0} \in \mathsf{Top}^\mathsf{G}$ be the object that agrees with $\widetilde{\gS}$ on objects $Y$ with $r(Y)>0$, and is the empty space on objects $Y$ with $r(Y)=0$ (recall that we have assumed that $0 \in \mathsf{G}$ is the only such object), then this obtains the structure of a left $\widetilde{\gS}$-module. It follows from Lemma \ref{lem:Damiolini} and the homotopy cartesian square \eqref{eq:CartSq2} that the augmentation gives an equivalence $|R_\bullet(\widetilde{\gS})| \to \widetilde{\gS}_{>0}$ of left $\widetilde{\gS}$-modules.

\begin{proof}[Proof of Theorem \ref{thm:repstab}]
We proceed as in the proof of Theorem \ref{thm:1}. Applying $B(\widetilde{\gM}, \widetilde{\gS}, -)$ to the homotopy cofibre sequence
$$\widetilde{\gS}_{>0} \lra \widetilde{\gS} \lra \bunit,$$
and using $B(\widetilde{\gM}, \widetilde{\gS}, \widetilde{\gS}) \overset{\sim}\to \widetilde{\gM}$, constructs the map \eqref{eq:PreKanExt} and identifies its homotopy cofibre with $B(\widetilde{\gM}, \widetilde{\gS}, \bunit )$, which is equivalent to $Q_\bL^{\widetilde{\gS}}(\widetilde{\gM})$ by \cite[Corollary 9.17]{e2cellsI}. 

On the other hand there are equivalences
$$B(\widetilde{\gM}, \widetilde{\gS}, \widetilde{\gS}_{>0}) \overset{\sim}\longleftarrow B(\widetilde{\gM}, \widetilde{\gS}, |R_\bullet(\widetilde{\gS})|) \overset{\sim}\lra |R_\bullet(\widetilde{\gM})|$$
over $\widetilde{\gM}$, using as in  the proof of Theorem \ref{thm:1} that $R_p(\widetilde{\gS}) \simeq \widetilde{\gS} \otimes (X^{\oplus p+1})_*(*)$ as a left $\widetilde{\gS}$-module, and similarly for $\widetilde{\gM}$. Finally, the homotopy fibre of $|R_\bullet(\widetilde{\gM})|(Y) \to \widetilde{\gM}(Y)$ over $\tilde{m} \in \widetilde{\gM}(Y)$ is $|W_\bullet(q(\tilde{m}))|$ as \eqref{eq:CartSq2} is homotopy cartesian.
\end{proof}

\subsection{$E_2$-algebras coming from groupoids}\label{sec:AlgFromGpd}

A useful application of this result is as follows. As in \cite[Section 17.1]{e2cellsI} (but replacing $\mathsf{sSet}$ by $\mathsf{Top}$) there is a $\gT \in \mathsf{Alg}_{E_2}(\mathsf{Top}^\mathsf{G})$ with $\gT(A) \simeq *$ if $r(A)>0$ and $\gT(0)=\emptyset$, which is also cofibrant in $\mathsf{Alg}_{E_2}(\mathsf{Top}^\mathsf{G})$. Choosing an equivalence $* \to \gT(X)$ we obtain by adjunction a map $X_*(*) \to \gT$, which extends to an $E_2$-map $f : \gE_2(X_*(*)) \to \gT$. This can be strictified to a map $\overline{f} : \widetilde{\gS}=\overline{\gE}_2(X_*(*)) \to \overline{\gT}$ of unital associative monoids in $\mathsf{Top}^\mathsf{G}$; furthermore these are cofibrant in this category by \cite[Lemma 12.7 (i)]{e2cellsI}. This gives $\overline{\gT}$ the structure of a right $\widetilde{\gS}$-module, cofibrant in $\mathsf{Top}^\mathsf{G}$, to which Theorem \ref{thm:repstab} can be applied.

The object $\gM := r_* \overline{\gT}$ satisfies
$$\gM(n) \simeq  \bigsqcup_{r(Y) = n}B\mathrm{Aut}_\mathsf{G}(Y)$$
because each $\overline{\gT}(Y)$ is contractible. If in addition
\begin{enumerate}[(I)]
\setcounter{enumi}{2}
\item\label{it:inj} the map $- \oplus X : \mathrm{Aut}_\mathsf{G}(A \oplus X^{\oplus n}) \to \mathrm{Aut}_\mathsf{G}(A \oplus X^{\oplus n+1})$ is injective for all $n \geq 0$, and

\item\label{it:canc} $Y \oplus X^{\oplus m} \cong A \oplus X^{\oplus n}$ with $1 \leq m \leq n$ implies $Y \cong A \oplus X^{\oplus n-m}$,
\end{enumerate}
 then, as explained in \cite[Section 7.3]{Krannich}, for a point $m \in B\mathrm{Aut}_\mathsf{G}(A \oplus X^{\oplus n}) \subset \gM$ the space $|W_\bullet(m)|$ is equivalent to the space $|W_n(A, X)_\bullet|$ of \cite[Definition 2.1]{RWW}, also called ``spaces of destabilisations''. The following gives a conceptual meaning to these spaces of destabilisations, analogous to that given by Theorem \ref{thm:1}.

\begin{cor}\label{cor:DestabCxIsModDec}
Under the assumptions above there is an $\mathrm{Aut}_\mathsf{G}(A \oplus X^{\oplus n})$-equivariant equivalence between the unreduced suspension of $|W_n(A, X)_\bullet|$ and $Q_\bL^{\widetilde{\gS}}(\overline{\gT})(A \oplus X^{\oplus n})$.
\end{cor}
\begin{proof}
Apply Theorem \ref{thm:repstab} to $\widetilde{\gM} = \overline{\gT}$, and use that $\overline{\gT}(Y) \simeq *$ so that the cofibre of \eqref{eq:PreKanExt} is the unreduced suspension of its fibre.
\end{proof}

\begin{rem}
In \cite[Definition 2.8]{RWW} there is formulated a simplicial complex $S_n(A,X)$ which is an ``unordereed version'' of the semi-simplicial sets $W_n(A,X)_\bullet$, and is mainly useful when the braided monoidal groupoid $(\mathsf{G}, \oplus, b, 0)$ is in fact symmetric monoidal. In this case $\gT$ has the structure of an $E_\infty$-algebra, and a similar analysis to that which we have carried out so far will show that $S_n(A,X)$ is  $\mathrm{Aut}_\mathsf{G}(A \oplus X^{\oplus n})$-equivariantly equivalent to the indecomposables $Q_\bL^{\overline{\gE}_\infty(X_*(*))}(\overline{\gT})(A \oplus X^{\oplus n})$ of $\overline{\gT}$ as a module over the free $E_\infty$-algebra on one generator. We leave the details of this argument to the appropriately motivated reader.
\end{rem}

\section{Coefficient systems, representation stability, and central stability}\label{sec:repstab}

In \cite[Section 19]{e2cellsI} it is discussed how to treat coefficient systems in the setting of Section \ref{sec:AlgFromGpd}. As a brief reminder, one fixes a commutative ring $\bk$ and works in the category $\mathsf{sMod}_\bk^\mathsf{G}$ of functors from $\mathsf{G}$ to simplicial $\bk$-modules. The constant functor $\underline{\bk}$ with value $\bk$ has the structure of a commutative algebra object in this category, and a \emph{coefficient system} $\gA$ is defined to be a right\footnote{In \cite[Section 19]{e2cellsI} left modules are considered, but there is no important difference.} $\underline{\bk}$-module. It is called \emph{discrete} if it takes values in $\bk$-modules (considered as discrete simplicial $\bk$-modules.)

Using $(-)_\bk := \bk[\mathrm{Sing}_\bullet(-)] : \mathsf{Top} \to \mathsf{sMod}_\bk$ we can transport much of the previous discussion into the category of simplicial $\bk$-modules. In particular there are unital associative monoids $\widetilde{\gS}_\bk \to \overline{\gT}_\bk$ which are cofibrant in $\mathsf{sMod}_\bk^\mathsf{G}$, and as $\overline{\gT}$ takes contractible values there is an equivalence of unital associative monoids $\overline{\gT}_\bk \overset{\sim}\to \underline{\bk}$, which is a cofibrant replacement of $\underline{\bk}$. Any coefficient system $\gA$ can therefore be considered as a right $\overline{\gT}_\bk$-module, and if $\gA^c \overset{\sim}\to \gA$ is a cofibrant replacement as such then taking Kan extensions along $r : \mathsf{G} \to \bN$ gives $\gR_\gA := r_*(\gA^c) \simeq \bL r_*(\gA)$ the structure of a right module over $\overline{\gR}_\bk := r_!(\overline{\gT}_\bk)$. By definition of homotopy Kan extension we have
$$H_{n,d}(\gR_\gA) = \bigoplus_{r(Y)=n} H_d(\mathrm{Aut}_\mathsf{G}(Y) ; \gA(Y)).$$
Using the right $\overline{\gR}_\bk$-module structure and $\sigma \in H_{1,0}(\gR_\bk)$ we can form the map $- \cdot \sigma : \gR_\gA \otimes S^{1,0} \to \gR_\gA$, and homological stability for the groups $\mathrm{Aut}_\mathsf{G}(Y)$ with coefficients in $\gA(Y)$ can be phrased as a vanishing line for the homology of the cofibre $\gR_\gA/\sigma$.

Assuming that $\gA$ is a discrete coefficient system we define
$$\mathrm{Tor}^{\underline{\bk}}_{p}(\gA, \bk)(Y) := H_{Y, d}(B(\gA, \underline{\bk}, \bk))$$
and combining \cite[Lemma 19.4]{e2cellsI} and \cite[Theorem 19.2]{e2cellsI} shows that an appropriate vanishing line for these $\mathrm{Tor}$-groups and homological stability for $\gR_\bk$ implies homological stability for $\gR_\gA$. A vanishing line for these $\mathrm{Tor}$-groups sometimes goes under the name of \emph{derived representation stability} for $\gA$. These $\mathrm{Tor}$-groups have a clear conceptual meaning: they measure how to construct $\gA$ as a cellular $\underline{\bk}$-module. (When $\mathsf{G}$ is the category of finite sets and bijections, then a $\underline{\bk}$-module recovers the notion of an $FI$-module, and $\mathrm{Tor}^{\underline{\bk}}_*(\bk, \gA)$ recovers $FI$-homology in the sense of \cite{ChurchEllenberg}.)

There is another measure of the complexity of a coefficient system $\gA$, namely the \emph{central stability homology} $\widetilde{H}_*(\gA)$ of Putman--Sam \cite{PS} and Patzt \cite{PatztCentral}. Our main goal here is to give a similar conceptual interpretation of these homology groups, and hence to revisit Patzt's theorem \cite[Theorem 5.7]{PatztCentral} relating $\widetilde{H}_*(\gA)$ and $\mathrm{Tor}^{\underline{\bk}}_*(\gA, \bk)$.

\begin{prop}\label{prop:CentralStab}
A discrete coefficient system $\gA$ may be considered as a right $\widetilde{\gS}_\bk$-module via $\widetilde{\gS}_\bk \to \overline{\gT}_\bk \overset{\sim}\to \underline{\bk}$, and then there are isomorphisms
$$H_{Y,d}^{\widetilde{\gS}_\bk}(\gA) = H_{Y, d}(B(\gA, \widetilde{\gS}_\bk, \bk)) \cong \widetilde{H}_{d-1}(\gA)_Y.$$
\end{prop}
\begin{proof}
Following Section \ref{sec:Pfthm:repstab}, the equivalence $|R_\bullet(\widetilde{\gS})| \to \widetilde{\gS}_{>0}$ and the cofibre sequence $\widetilde{\gS}_{>0} \to \widetilde{\gS} \to \bunit$ may be $\bk$-linearised, and applying $B(\gA, \widetilde{\gS}_\bk, -)$ and using that $B(\gA, \widetilde{\gS}_\bk, \widetilde{\gS}_\bk) \overset{\sim}\to \gA$ then gives a homotopy cofibre sequence
$$B(\gA, \widetilde{\gS}_\bk, |R_\bullet(\widetilde{\gS}_\bk)|) \lra \gA \lra B(\gA, \widetilde{\gS}_\bk, \bk).$$
We may commute homotopy colimits and write the left-hand term as $|B(\gA, \widetilde{\gS}_\bk, R_\bullet(\widetilde{\gS}_\bk))|$. As $R_p(\widetilde{\gS}_\bk) \simeq \widetilde{\gS}_\bk \otimes (X^{\oplus p+1})_*(\bk)$ as a left $\widetilde{\gS}_\bk$-module, we have $B(\gA, \widetilde{\gS}_\bk, R_p(\widetilde{\gS}_\bk)) \simeq \gA \otimes (X^{\oplus p+1})_*(\bk)$. The Bousfield--Kan spectral sequence for the augmented $\widetilde{\Delta}_\inj^{op}$-object $B(\gA, \widetilde{\gS}_\bk, R_\bullet(\widetilde{\gS}_\bk)) \to \gA$ therefore takes the form
$$E^1_{Y,p,q} = H_{Y,q}(\gA \otimes (X^{\oplus p+1})_*(\bk)) \Longrightarrow H_{Y, p+q+1}(B(\gA, \widetilde{\gS}_\bk, \bk))$$
for $p \geq -1$. As $\gA \otimes (X^{\oplus p+1})_*(\bk)$ is discrete this spectral sequence is supported along the line $q=0$ and so collapses at $E^2$. By definition of Day convolution it has
$$E^1_{p,0} = \colim_{\substack{(Z, f) s.t.\ \\ f: Z \oplus X^{\oplus p+1} \overset{\sim}\to Y}} \gA(Z)$$
and by definition of the $\widetilde{\Delta}_\inj^{op}$-object $R_\bullet(\widetilde{\gS}_\bk)$ the $d^1$-differential is given by the alternating sum of the maps
$$\delta_0, \delta_1, \ldots, \delta_p : \colim_{\substack{(Z, f) s.t.\ \\ f: Z \oplus X^{\oplus p+1} \overset{\sim}\to Y}} \gA(Z) \lra \colim_{\substack{(Z', f') s.t.\ \\ f': Z' \oplus X^{\oplus p} \overset{\sim}\to Y}} \gA(Z')$$
where $\delta_i$ braids the $i$th copy of $X$ in $X^{\oplus p+1}$ in front of the others to put it first, then adds it to $Z$ to form $Z' := Z \oplus X$; it then applies $\gA(Z) \to \gA(Z \oplus X)$ given by the right $\widetilde{\gS}_\bk$-module structure. Using \cite[Proposition 4.3]{PatztCentral} one finds the same description of the complex that calculates central stability homology, so that $\widetilde{H}_p(\gA)_Y \cong E^2_{p,0} \cong H_{Y, p+1}(B(\gA, \widetilde{\gS}_\bk, \bk))$ as claimed.
\end{proof}

For the following we strengthen assumption (\ref{it:inj}) of Section \ref{sec:AlgFromGpd} to 
\begin{enumerate}[(I$^\prime$)]
\setcounter{enumi}{2}
\item\label{it:ForKunneth} the map $- \oplus - : \mathrm{Aut}_\mathsf{G}(U) \times \mathrm{Aut}_\mathsf{G}(V) \to \mathrm{Aut}_\mathsf{G}(U \oplus V)$ is injective for all $U,V \in \mathsf{G}$.
\end{enumerate}
With the interpretations of $H_{*,*}^{\widetilde{\gS}_\bk}(\overline{\gT}_\bk)$ given by Corollary \ref{cor:DestabCxIsModDec} and of $H_{*,*}^{\widetilde{\gS}_\bk}(\gA)$ given by Proposition \ref{prop:CentralStab}, and the interpretation of a vanishing line for $\mathrm{Tor}^{\underline{\bk}}_{*}(\bk, \gA)$ in terms of a minimal $\underline{\bk}$-module resolution of $\gA$, the following is then a version of Patzt's \cite[Theorem 5.7]{PatztCentral}.

\begin{thm}
Let $f : \bN \to \bN$ and assume that $H_{Y,d}^{\widetilde{\gS}_\bk}(\overline{\gT}_\bk)=0$ for $d < f(r(Y))$.
\begin{enumerate}[(i)]
\item If $g : \bN \to \bN$ is such that $\mathrm{Tor}^{\underline{\bk}}_{d}(\bk, \gA)(V)=0$ for $d < g(r(V))$, then $H_{Y,d}^{\widetilde{\gS}_\bk}(\gA)=0$ for $d < \bar{g}(r(Y))$, where $\bar{g}(n) := \min\{f(p) + g(n-p) \, | \, 0 \leq p \leq n\}$.

\item If $h : \bN \to \bN$ is such that $H_{U,d}^{\widetilde{\gS}_\bk}(\gA)=0$ for $d < h(r(U))$, then $\mathrm{Tor}^{\underline{\bk}}_{d}(\bk, \gA)(Y)=0$ for $d < \bar{h}(r(Y))$, where $\bar{h}(n)$ is defined inductively by $\bar{h}(0)=h(0)$ and $\bar{h}(n) := \min\{ h(n), f(p)+\bar{h}(n-p)+1 \, | \, 1 \leq p \leq n\}$.
\end{enumerate}
\end{thm}
\begin{proof}
Consider $B(B(\bk, \widetilde{\gS}_\bk, \overline{\gT}_\bk), \overline{\gT}_\bk, \gA)$. By interchanging geometric realisations and using $B(\overline{\gT}_\bk,\overline{\gT}_\bk,\gA) \overset{\sim}\to \gA$ this is equivalent to $B(\bk, \widetilde{\gS}_\bk, \gA)$. On the other hand we may descendingly filter $B(\bk, \widetilde{\gS}_\bk, \overline{\gT}_\bk)$ by rank, as in \cite[Remark 19.5]{e2cellsI}. The associated graded is equivalent to $B(\bk, \widetilde{\gS}_\bk, \overline{\gT}_\bk)$ but its $\widetilde{\gS}_\bk$-module structure is now trivial (i.e.\ induced via the augmentation $\widetilde{\gS}_\bk \to \bk$). Thus the induced filtration of $B(B(\bk, \widetilde{\gS}_\bk, \overline{\gT}_\bk), \overline{\gT}_\bk, \gA)$ has associated graded $B(\bk, \widetilde{\gS}_\bk, \overline{\gT}_\bk) \otimes B(\bk, \overline{\gT}_\bk, \gA)$. Using (\ref{it:ForKunneth}$^\prime$) we may apply \cite[Lemma 10.6]{e2cellsI} to see that the associated spectral sequence takes the form
$$E^1_{Y, p, q} = \colim_{\substack{U \oplus V \overset{\sim}\to Y \\ r(U) = p}} H_{p+q}\left( B(\bk, \widetilde{\gS}_\bk, \overline{\gT}_\bk)(U) \otimes B(\bk, \overline{\gT}_\bk, \gA)(V) \right) \Longrightarrow H_{Y,p+q}^{\widetilde{\gS}_\bk}(\gA).$$
For such $U$ and $V$ there is also a K{\"u}nneth spectral sequence \cite[Lemma 10.5]{e2cellsI}
$$\bigoplus_{t'+t''=q} \mathrm{Tor}^\bk_s(H_{U,t'}^{\widetilde{\gS}_\bk}(\overline{\gT}_\bk), H_{V, t''}^{\overline{\gT}_\bk}(\gA)) \Longrightarrow H_{s+t}\left( B(\bk, \widetilde{\gS}_\bk, \overline{\gT}_\bk)(U) \otimes B(\bk, \overline{\gT}_\bk, \gA)(V) \right).$$
By assumption $H_{U,t'}^{\widetilde{\gS}_\bk}(\overline{\gT}_\bk)=0$ for $t' < f(p)$ as $r(U)=p$. As $\gA$ is assumed to be discrete, the discussion before \cite[Lemma 19.4]{e2cellsI} gives $H_{V, t''}^{\overline{\gT}_\bk}(\gA) \cong \mathrm{Tor}^{\underline{\bk}}_{ t''}(\bk, \gA)(V)$. 

Supposing first that $\mathrm{Tor}^{\underline{\bk}}_{d}(\bk, \gA)(V)=0$ for all $d < g(r(V))$, then the K{\"u}nneth spectral sequence implies that
$$H_{p+q}\left( B(\bk, \widetilde{\gS}_\bk, \overline{\gT}_\bk)(U) \otimes B(\bk, \overline{\gT}_\bk, \gA)(V) \right)=0 \text{ for } p+q < f(p) + g(r(Y)-p),$$
and so the first spectral sequence implies that $H_{Y,d}^{\widetilde{\gS}_\bk}(\gA)=0$ for $d < \bar{g}(r(Y))$, by definition of $\bar{g}$.

Suppose now that $H_{Y,d}^{\widetilde{\gS}_\bk}(\gA)=0$ for all $d < h(r(Y))$. Suppose for an induction that $\mathrm{Tor}^{\underline{\bk}}_{d}(\bk, \gA)(Y')=0$ for all $d < \bar{h}(r(Y'))$ and all $r(Y') < r(Y)$. The only object $U \in \mathsf{G}$ with $r(U)=0$ is $U=0$ by (\ref{it:NoRk0}), and $H_{0,*}^{\widetilde{\gS}_\bk}(\overline{\gT}_\bk)=\bk[0]$ consists of free $\bk$-modules. Thus the K{\"u}nneth spectral sequence collapses to give $E^1_{Y,0,q} = H_{Y, q}^{\overline{\gT}_\bk}(\gA)$. On the other hand if $r(U)>0$ then $r(V) < r(Y)$ and so by the inductive hypothesis $H_{V, t''}^{\overline{\gT}_\bk}(\gA)=0$ for $t'' < \bar{h}(r(V))$: it then follows by the same argument as above that for $p>0$ we have $E^1_{Y,p,q}=0$ when $p+q < f(p) + \bar{h}(r(Y)-p)$. As the differentials have the form $d^r : E^r_{Y, 0, d} \to E^r_{Y, r, d-r-1}$ the cokernel of the edge homomorphism
$$H_{Y, d}^{\widetilde{\gS}_\bk}(\gA) \lra E^1_{Y,0,d} = H_{Y, d}^{\overline{\gT}_\bk}(\gA)$$
is trivial for $d-1 < \min\{f(p) + \bar{h}(r(Y)-p) \, | \, 1 \leq p \leq r(Y)\}$. As the domain of this edge homomorphism vanishes for $d < h(r(Y))$, it follows that $H_{Y, d}^{\overline{\gT}_\bk}(\gA) \cong \mathrm{Tor}^{\underline{\bk}}_{d}(\bk, \gA)(Y)$ vanishes for $d < \bar{h}(r(Y))$.
\end{proof}

\section{The space of destabilisations and the splitting complexes}

In this section we continue to work in combinatorial setting of Section \ref{sec:AlgFromGpd}, and will explain the relationship between the connectivities of the spaces of destabilisations $|W_n(0, X)_\bullet|$ defined in \cite[Definition 2.1]{RWW}, and the connectivities of the ``$E_1$- and $E_2$-splitting complexes" $|Z^{E_1}_{\bullet}(X^{\oplus n})|$ and $|Z^{E_2}_{\bullet, \bullet}(X^{\oplus n})|$ defined in \cite[Sections 17.2 and 17.3]{e2cellsI}. 

\begin{prop}\label{prop:TransfConn}
Let $(\mathsf{G}, \oplus, b, 0)$ be a braided monoidal groupoid satisfying (\ref{it:NoRk0}), (\ref{it:TrivAut0}), (\ref{it:inj}), and (\ref{it:canc}), and suppose $r : \mathsf{G} \to \bN$ is a bijection on isomorphism classes of objects, with $X \in \mathsf{G}$ corresponding to $1 \in \bN$. Let $f : \bN \to \bN$ be a function satisfying $f(n) \leq n$ and $f(n+m) \leq f(n)+f(m)$. Then the following are equivalent:
\begin{enumerate}[(i)]
\item\label{it:TransfConn1} the homology of $|W_n(0, X)_\bullet|$ vanishes in degrees $* < f(n) - 1$ for all $n > 1$,

\item\label{it:TransfConn2} the homology of $|Z^{E_1}_{\bullet}(X^{\oplus n})|$ vanishes in degrees $* < f(n)+1$ for all $n>1$,

\item\label{it:TransfConn3} the homology of $|Z^{E_2}_{\bullet, \bullet}(X^{\oplus n})|$ vanishes in degrees $* < f(n)+2$ for all $n>1$.
\end{enumerate}
\end{prop}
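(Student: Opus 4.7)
The plan is to prove the chain $(i) \Leftrightarrow (ii) \Leftrightarrow (iii)$ by induction on $n$, using spectral sequence comparisons between the three complexes. Heuristically, each complex parameterizes a different stratification of decompositions of $X^{\oplus n}$: destabilizations $X^{\oplus n} \cong B \oplus X^{\oplus(p+1)}$ for $W$, full $E_1$-factorizations $X^{\oplus n} \cong A_0 \oplus \cdots \oplus A_p$ (with $A_i \neq 0$) for $Z^{E_1}$, and $E_2$-splittings incorporating the braiding for $Z^{E_2}$. The successive shifts $\sigma-1, \sigma+1, \sigma+2$ in the connectivity ranges should match the degree shifts coming from successive bar-/join-type constructions that relate one complex to the next.

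For the equivalence $(ii) \Leftrightarrow (iii)$, I would interpret $|Z^{E_2}_{\bullet,\bullet}|$ as the realization of a bi-semi-simplicial object in which one direction recovers $|Z^{E_1}_{\bullet}|$ and the other direction encodes the braided (``second'' $E_1$) structure. The injectivity axiom (\ref{it:inj}) controls this second direction, and the bi-simplicial spectral sequence contributes exactly the $+1$ shift between $\sigma+1$ and $\sigma+2$. This closely follows the passage between $E_1$- and $E_2$-indecomposables developed in \cite[Sections 17.2--17.3]{e2cellsI}; the key input from the hypotheses is $\sigma \ast \sigma \geq \sigma$, which guarantees that the inductive bounds at smaller $n$ propagate across the columns of the spectral sequence.

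For $(i) \Leftrightarrow (ii)$ I would stratify $|Z^{E_1}_{\bullet}(X^{\oplus n})|$ by the last factor $A_p$ of a decomposition. Cancellation (\ref{it:canc}) makes the stratification well-defined on isomorphism classes, and the stratum where $A_p \cong X^{\oplus k}$ (for $k\geq 1$) should be identifiable, after a suspension, with a join/product involving a destabilization complex $|W_n(0,X)_\bullet|$ and a lower-rank splitting complex $|Z^{E_1}_{\bullet}(X^{\oplus n-k})|$. Injectivity (\ref{it:inj}) ensures that the automorphism groups assemble correctly, and subadditivity $\sigma\ast\sigma \geq \sigma$ then feeds the inductive hypothesis into a spectral sequence estimate on each stratum. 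The total shift of $2$ between $(i)$ and $(ii)$ should appear as one $+1$ from the join/suspension and one $+1$ from the shift between ``destabilization'' (which only remembers the rightmost $\sigma$-direction) and a generic non-trivial factor in an $E_1$-factorization.

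The main obstacle will be in $(i) \Leftrightarrow (ii)$: pinning down the combinatorial identification of the strata of $|Z^{E_1}_{\bullet}|$ with joins involving $|W_\bullet|$-type complexes, and precisely tracking the combinatorial data (labels, orderings, automorphism groups) so that the shift in connectivity comes out to exactly $+2$ rather than $+1$ or $+3$. The base case $n=2$ is handled directly since all three complexes become essentially discrete and can be compared by hand. Once the identification of strata is in place, the spectral sequence estimates are routine given the inductive hypothesis and subadditivity of $\sigma$, and the comparison $(ii)\Leftrightarrow(iii)$ is mostly an application of the machinery already present in \cite{e2cellsI}.
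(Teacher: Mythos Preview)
Your proposal has a genuine gap in the step $(i) \Leftrightarrow (ii)$. You claim that stratifying $|Z^{E_1}_\bullet(X^{\oplus n})|$ by the isomorphism type of the last factor $A_p \cong X^{\oplus k}$ yields strata expressible as joins involving the destabilisation complex $|W_n(0,X)_\bullet|$. But this is not so: fixing $A_p \cong X^{\oplus k}$ leaves the data of a $(p-1)$-simplex of $Z^{E_1}(X^{\oplus n-k})$ together with a coset in $\mathrm{Aut}(X^{\oplus n})/(\mathrm{Aut}(X^{\oplus n-k}) \times \mathrm{Aut}(X^{\oplus k}))$. The complex $W_n$, whose $p$-simplices record decompositions $X^{\oplus n} \cong B \oplus X \oplus \cdots \oplus X$ with $p+1$ trailing copies of the single object $X$, does not appear in this picture. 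You have not proposed any mechanism that converts the coset data into a $W$-complex, and I do not see one; this is precisely the ``main obstacle'' you flag, and it is fatal to the sketch as written.

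The paper's argument is structurally quite different and does not attempt a direct combinatorial comparison at all. It reinterprets each of the three complexes as a flavour of derived indecomposables of the $E_2$-algebra $\gT_\bZ$: by \cite[Proposition 17.14]{e2cellsI} one has $\widetilde{H}_{d+k}(|Z^{E_k}(X^{\oplus n})|) \cong H^{E_k}_{X^{\oplus n},d}(\gT_\bZ)$ for $k=1,2$, while Lemma~\ref{lem:DestabCxIsModDec} (which rests on Theorem~\ref{thm:1} and hence on Damiolini's contractibility theorem) identifies the suspension of $|W_n(0,X)_\bullet|$ with the $\overline{\gE}_2(X_*(*))$-module indecomposables of $\overline{\gT}$. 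The equivalence $(ii) \Leftrightarrow (iii)$ is then \cite[Theorem 14.4]{e2cellsI} transferring vanishing lines between $E_1$- and $E_2$-homology, which is close in spirit to what you outline. The equivalence $(i) \Leftrightarrow (iii)$, however, is the comparison between \emph{module} indecomposables and \emph{$E_2$-algebra} indecomposables of the map $\gE_2(X_*(\bZ)) \to \gT_\bZ$; this is \cite[Theorem 15.9]{e2cellsI}, whose hypotheses require $\mathsf{G}$ to be symmetric monoidal, and the paper devotes an entire appendix (Theorem~\ref{thm:EkModuleHurewicz}) to extending it to the merely braided case. The bootstrap from $(i)$ to $(iii)$ also requires an inductive argument with truncated connectivities $\sigma_k$ that is absent from your sketch. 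None of these ingredients---Damiolini's theorem, the module-indecomposables interpretation of $W_n$, or the algebra-versus-module comparison theorem---appears in your proposal, and they are what make the argument go through.
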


Hepworth \cite[Theorem 13.2]{Hepworth} has shown that (\ref{it:TransfConn2}) $\Rightarrow$ (\ref{it:TransfConn1}) under slightly different connectivity hypotheses, see Example \ref{ex:Hepworth}.

\begin{proof}[Proof of Proposition \ref{prop:TransfConn}]
We will first show that (\ref{it:TransfConn3}) is equivalent to (\ref{it:TransfConn1}), and later the simpler statement that (\ref{it:TransfConn3}) is equivalent to (\ref{it:TransfConn2}). To do so we will use the abstract connectivity (cf.\ \cite[Definition 11.1]{e2cellsI}) $\sigma : \mathsf{G} \to [-\infty, \infty]_{\geq }$ defined by $\sigma(A) := f(r(A))$, which by assumption satisfies $\sigma \leq r$ and $\sigma * \sigma \geq \sigma$.

As before we may linearise (via $(-)_\bZ := \bZ[\mathrm{Sing}_\bullet(-)] : \mathsf{Top} \to \mathsf{sMod}_\bZ$) the $E_2$-map $f : \gE_2(X_*(*)) \to \gT$ to obtain a map $f_\bZ : \gE_2(X_*(\bZ)) \to \gT_\bZ$ of $E_2$-algebras in $\mathsf{sMod}_\bZ^\mathsf{G}$, with $\gT_\bZ(A) \simeq \bZ$ if $r(A)>0$ and $\gT_\bZ(0)=0$.  By \cite[Proposition 17.14]{e2cellsI} we have 
$$S^{0,2} \otimes Q^{E_2}_\bL(\gT_\bZ)(A)\simeq |\bZ[Z^{E_2}_{\bullet, \bullet}(A)]|.$$
As $Q^{E_2}_\bL(\gE_2(X_*(\bZ))) \simeq X_*(\bZ)$ is supported on the object $X$, we have
\begin{equation}\label{eq:AlgSide}
H_{X^{\oplus n}, d}^{E_2}(\gT_\bZ, \gE_2(X_*(\bZ))) \cong H_{X^{\oplus n}, d}^{E_2}(\gT_\bZ) \cong \widetilde{H}_{d+2}(|Z^{E_2}_{\bullet, \bullet}(X^{\oplus n})|)
\end{equation}
as long as $n > 1$. On the other hand, as $Q^{\overline{\gE}_2(X_*(\bZ))}_\bL(\overline{\gE}_2(X_*(\bZ))) \simeq 0_*(\bZ)$ is supported on the object 0, linearising the conclusion of Corollary \ref{cor:DestabCxIsModDec} gives
\begin{equation}\label{eq:ModSide}
H^{\overline{\gE}_2(X_*(\bZ))}_{X^{\oplus n}, d}(\overline{\gT}_\bZ, \overline{\gE}_2(X_*(\bZ))) \cong \widetilde{H}_{d-1}(|W_n(0, X)_\bullet|)
\end{equation}
as long as $n > 0$.

The homology groups to which (\ref{it:TransfConn3}) and (\ref{it:TransfConn1}) refer are the right-hand sides of \eqref{eq:AlgSide} and \eqref{eq:ModSide} respectively: we will compare them using the interpretations given by the left-hand sides, as relative $E_2$-algebra indecomposables and $\overline{\gE}_2(X_*(\bZ))$-module indecomposables respectively. 

Suppose first that $(\mathsf{G}, \oplus, b, 0)$ is in fact \emph{symmetric} monoidal. Then we may apply \cite[Theorem 15.9]{e2cellsI} in the category $\mathsf{C} := \mathsf{sMod}_\bZ^\mathsf{G}$ with $k=2$, because $\mathsf{G}$ is 3-monoidal (= symmetric monoidal) and so $\mathsf{C}$, with the Day convolution monoidal structure, is too. This Theorem, applied to the morphism $f_\bZ : \gE_2(X_*(\bZ)) \to \gT_\bZ$ with $\rho = r$ and with $\sigma$ an abstract connectivity such that $\sigma * \sigma \geq \sigma$ and $r \geq \sigma$, says the following: if $H^{E_2}_{X^{\oplus n}, d}(\gT_\bZ, \gE_2(X_*(\bZ)))=0$ whenever $d < \sigma(X^{\oplus n})$ then there is a morphism
\begin{equation}\label{eq:Comparison}
H^{\overline{\gE}_2(X_*(\bZ))}_{X^{\oplus n}, d}(\overline{\gT}_\bZ, \overline{\gE}_2(X_*(\bZ))) \lra H_{X^{\oplus n}, d}^{E_2}(\gT_\bZ, \gE_2(X_*(\bZ)))
\end{equation}
which is an isomorphism for $d < (\sigma * \sigma)(X^{\oplus n})$ and an epimorphism for $d < (\sigma * \sigma)(X^{\oplus n})+1$.

If $\sigma$ is such that (\ref{it:TransfConn3}) holds then by \eqref{eq:AlgSide} the assumption for the above is satisfied, and so as $\sigma * \sigma \geq \sigma$ it follows that $H^{\overline{\gE}_2(X_*(\bZ))}_{X^{\oplus n}, d}(\overline{\gT}_\bZ, \overline{\gE}_2(X_*(\bZ)))=0$ for $d < \sigma(X^{\oplus n})$, so by \eqref{eq:ModSide} it follows that the homology of $|W_n(0, X)_\bullet|$ vanishes in degrees $* < \sigma(X^{\oplus n})-1$ for $n > 1$.

In the other direction, if $\sigma$ is such that (\ref{it:TransfConn1}) holds then $H^{\overline{\gE}_2(X_*(\bZ))}_{X^{\oplus n}, d}(\overline{\gT}_\bZ, \overline{\gE}_2(X_*(\bZ)))=0$ for $d < \sigma(X^{\oplus n})$ by \eqref{eq:ModSide}. Define abstract connectivities $\sigma_k$ by
$$\sigma_k(X^{\oplus n}) := \begin{cases}
\sigma(X^{\oplus n}) & n \leq k\\
\sigma(X^{\oplus k}) & n \geq k,
\end{cases}$$
which satisfy $\sigma_k * \sigma_k \geq \sigma_k$ and $\sigma_k \leq r$. As $H_{X^{\oplus n}, 0}^{E_2}(\gT_\bZ, \gE_2(X_*(\bZ)))=0$ for all $n$ we have $H_{X^{\oplus n}, d}^{E_2}(\gT_\bZ, \gE_2(X_*(\bZ)))=0$ for $d < \sigma_1(X^{\oplus n})$, because $\sigma(X) \leq r(X)=1$ by assumption. Suppose for an induction that $H_{X^{\oplus n}, d}^{E_2}(\gT_\bZ, \gE_2(X_*(\bZ)))=0$ for $d < \sigma_k(X^{\oplus n})$. Then by \cite[Theorem 15.9]{e2cellsI} the map \eqref{eq:Comparison} is an epimorphism for $d < (\sigma_k * \sigma_k)(X^{\oplus n})+1$, and by assumption its source vanishes for $d < \sigma(X^{\oplus n})$, so we conclude that its target vanishes for
$$d < \inf(\sigma, \sigma_k * \sigma_k + 1)(X^{\oplus n}).$$
In particular it vanishes for $d < \inf(\sigma, \sigma_k + 1)(X^{\oplus n})$ and hence also for $d < \sigma_{k+1}(X^{\oplus n})$, as $\sigma_{k+1} \leq \sigma$ and $\sigma_{k+1} \leq \sigma_{k}+1$. It follows by induction that $H_{X^{\oplus n}, d}^{E_2}(\gT_\bZ, \gE_2(X_*(\bZ)))=0$ for $d < \sigma_\infty(X^{\oplus n}) = \sigma(X^{\oplus n})$. Using \eqref{eq:AlgSide} this translates to the homology of $|Z^{E_2}_{\bullet, \bullet}(X^{\oplus n})|$ vanishing in degrees $* < \sigma(X^{\oplus n})+2$ for $n>1$. This finishes the proof that (\ref{it:TransfConn3}) is equivalent to (\ref{it:TransfConn1}) if $(\mathsf{G}, \oplus, b, 0)$ is {symmetric} monoidal.

If $(\mathsf{G}, \oplus, b, 0)$ is only \emph{braided} monoidal then we can not appeal directly to \cite[Theorem 15.9]{e2cellsI}: it proof uses \cite[Theorem 15.3]{e2cellsI} which is false if $k=2$ and $\mathsf{G}$ is only braided monoidal (see Example \ref{ex:Counterexample}). However, in Appendix \ref{sec:Appendix} we show that the conclusion of \cite[Theorem 15.9]{e2cellsI} is nonetheless true when $k=2$ and $\mathsf{G}$ is only braided monoidal. Given this, the above argument goes through to show that (\ref{it:TransfConn3}) is equivalent to (\ref{it:TransfConn1}).

To see that (\ref{it:TransfConn3}) and (\ref{it:TransfConn2}) are equivalent we use the results of \cite[Section 14]{e2cellsI} for transferring vanishing lines, along with 
\begin{equation}\label{eq:AlgSideAgain}
H_{X^{\oplus n}, d}^{E_1}(\gT_\bZ) \cong \widetilde{H}_{d+1}(|Z^{E_1}_{\bullet}(X^{\oplus n})|)
\end{equation}
from \cite[Proposition 17.14]{e2cellsI}. If $\sigma$ is such that (\ref{it:TransfConn2}) holds then \eqref{eq:AlgSideAgain} shows that $H_{X^{\oplus n}, d}^{E_1}(\gT_\bZ)=0$ for $d < \sigma(X^{\oplus n})$ for $n > 1$, so letting 
$$\rho(X^{\oplus n}) := \begin{cases}
\sigma(X^{\oplus n})+1 & n > 1\\
n & n \leq 1
\end{cases}$$
we have $\rho * \rho \geq \rho$ and $H_{X^{\oplus n}, d}^{E_1}(\gT_\bZ)=0$ for $d < \rho(X^{\oplus n})-1$, so by \cite[Theorem 14.4]{e2cellsI} it follows that $H_{X^{\oplus n}, d}^{E_2}(\gT_\bZ)=0$ for $d < \rho(X^{\oplus n})-1$ (so for $d < \sigma(X^{\oplus n})$ and $n>1$), which via \eqref{eq:AlgSide} implies that (\ref{it:TransfConn3}) holds. Using the same $\rho$, \cite[Theorem 14.4]{e2cellsI} shows that (\ref{it:TransfConn3}) implies (\ref{it:TransfConn2}).
\end{proof}

\begin{example}
Let $(\mathsf{G}, \oplus, b, 0)$ be the free braided monoidal groupoid on one object $X$, so $\mathrm{Aut}_\mathsf{G}(X^{\oplus n}) \cong \beta_n$ is the braid group on $n$ strands. In this case $\gT \in \mathsf{Alg}_{E_2}(\mathsf{Top}^\mathsf{G})$ is the free $E_2$-algebra on $X_*(*)$, and so $|Z^{E_2}_{\bullet, \bullet}(A)|$ is the value at $A \in \mathsf{G}$ of the object $S^{0,2} \wedge X_*(S^0) \in \mathsf{Top}_*^{\mathsf{G}}$. This is $S^2$ when evaluated at $X$ and contractible otherwise, so in general when evaluated at $X^{\oplus n}$ its homology vanishes in degrees $* < n + 2$ for all $n > 1$. By Proposition \ref{prop:TransfConn} it then follows that $|W_n(0, X)_\bullet|$ is homologically $(n-2)$-connected. The latter may be described as an arc complex \cite[Section 5.6.2]{RWW}. Note however that we used this connectivity (and in fact that it is contractible) in the proof of Lemma \ref{lem:Damiolini}, so this is not new information.
\end{example}

\begin{example}
Similarly, if $(\mathsf{G}, \oplus, b, 0)$ be the free symmetric monoidal groupoid on one object $X$, so $\mathrm{Aut}_\mathsf{G}(X^{\oplus n}) \cong \Sigma_n$ is the $n$-th symmetric group, then $\gT$ is the free $E_\infty$-algebra on $X_*(*)$. Thus $|Z^{E_2}_{\bullet, \bullet}(A)| \simeq E_\infty(X_*(S^{2}))(A)$ by combining \cite[Theorem 13.7, 13.8, 17.4]{e2cellsI}. At $A=X^{\oplus n}$ this evaluates to $(E\Sigma_n)_+ \wedge_{\Sigma_n }(S^2)^{\wedge n}$ and so has trivial homology in degrees $* < 2n$, so in particular in degrees $* < n+2$ for all $n > 1$. By Proposition \ref{prop:TransfConn} it then follows that $|W_n(0, X)_\bullet|$ is homologically $(n-2)$-connected. The latter may be identified with the ``complex of injective words'', which gives a (very complicated) new proof for the homological high-connectivity of this semi-simplicial set.
\end{example}

\begin{example}\label{ex:Hepworth}
That $|Z^{E_1}_\bullet(X^{\oplus n})|$ be $(n-1)$-connected  is called the ``standard connectivity estimate'' in \cite[Definition 17.6]{e2cellsI}, and several examples of of braided monoidal groupoids are known to satisfy this: general linear groups over Dedekind domains of class number 1 \cite[Section 18.2]{e2cellsI}, mapping class groups of oriented surfaces \cite[Theorem 3.4]{e2cellsII}, automorphism groups of free groups \cite[Corollary 4.5]{Hepworth}. In this case Proposition \ref{prop:TransfConn} applies with $f(n) = \tfrac{n}{2}$ to show that $|W_n(0, X)_\bullet|$ has trivial homology in degrees $* < \tfrac{n-2}{2}$, i.e.\ is homologically $\tfrac{n-3}{2}$-connected. This recovers \cite[Theorem 13.2]{Hepworth} at the level of homology.
\end{example}

\begin{example}
In \cite[Section 5]{RWW} many examples are given of braided monoidal groupoids $(\mathsf{G}, \oplus, b, 0)$ such that $|W_n(0, X)_\bullet|$ is $\tfrac{n-3}{2}$-connected. For example, the groupoids corresponding to: automorphism groups of free groups \cite[Proposition 5.3 and Theorem 2.10]{RWW}, general linear groups of rings having stable rank $\leq 1$ \cite[Lemma 5.10]{RWW}, mapping class groups of orientable surfaces \cite[Lemma 5.25]{RWW} and certain 3-manifolds \cite[Section 5.7]{RWW}. Setting 
$$f(n) := \begin{cases}
\tfrac{n+1}{2} & n > 0\\
0 & n=0,
\end{cases}$$
we have $f(n) \leq n$ and $f(n+m) \leq f(n)+f(m)$, and $|W_n(0, X)_\bullet|$ has trivial homology in degrees $* < f(n)-1$. By Proposition \ref{prop:TransfConn} it then follows that for $n>1$ the space $|Z^{E_1}_{\bullet}(X^{\oplus n})|$ has trivial homology in degrees $* < \tfrac{n+3}{2}$, in all of these cases. 
\end{example}

\appendix

\section{Comparing algebra and module cells, extended}\label{sec:Appendix}

The goal of this technical appendix is to relax very slightly the hypotheses of \cite[Theorem 15.9]{e2cellsI} in the case $k=2$, as follows. (In the following $\gS$ no longer denotes the free $E_2$-algebra on one generator! The notation is parallel to \cite[Theorem 15.9]{e2cellsI}.)

\begin{thm}\label{thm:EkModuleHurewicz}
Suppose that $\mathsf{S}$ satisfies \cite[Axiom 11.19]{e2cellsI}, and that $\mathsf{G}$ is braided monoidal and Artinian. Let $\rho, \sigma \colon \mathsf{G} \to [-\infty,\infty]_{\geq}$ be abstract connectivities such that $\rho * \rho \geq \rho$, $\sigma * \sigma \geq \sigma$, and $\rho * \sigma \geq \sigma * \sigma$. If
\begin{enumerate}[\indent (i)]
\item $\gR \in \mathsf{Alg}_{E_2}(\mathsf{C})$ is such that $H^{E_2}_{g,d}(\gR)=0$ for $d < \rho(g)-1$,
\item $f \colon \gR \to \gS$ is an $E_2$-algebra map such that $H^{E_2}_{g,d}(\gS, \gR)=0$ for $d <  \sigma(g)$, and
\item $\gR$ and $\gS$ are cofibrant in $\mathsf{C}$, $0$-connective, and reduced,
\end{enumerate}
then there is a map $H^{\overline{\gR}}_{g,d}(\overline{\gS}, \overline{\gR}) \to H^{E_2}_{g,d}(\gS, \gR)$ which is an isomorphism for $d < (\sigma * \sigma)(g)$, and an epimorphism for $d < (\sigma * \sigma)(g)+1$.
\end{thm}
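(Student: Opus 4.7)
The plan is to adapt the proof of \cite[Theorem 15.9]{e2cellsI} to the braided (rather than symmetric) setting. The comparison map $H^{\overline{\gR}}_{g,d}(\overline{\gS}, \overline{\gR}) \to H^{E_2}_{g,d}(\gS, \gR)$ is constructed exactly as in \cite[\S 15]{e2cellsI}, from the natural inclusion of the associative bar complex computing $\overline{\gR}$-module indecomposables into the $E_2$-operadic bar complex computing $E_2$-algebra indecomposables; this construction uses only the monoidal (not braided or symmetric) structure on $\mathsf{G}$. My first step is to reduce, via an $E_2$-cellular approximation of $f$ using \cite[Theorem 11.21]{e2cellsI} and the resulting skeletal filtration of $\gS$ over $\gR$, to controlling the comparison for a single cell attachment of bidegree $(g,d)$ with $d \geq \sigma(g)$.

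The heart of the proof is then the analysis of such a free cell attachment, where both sides can be expressed as the homology of bar-type complexes built from $\gR$. The difference between the two sides is governed by the extra $E_2$-structure: for each $n \geq 2$, the relative $E_2$-bar construction carries the $n$-th unordered configuration space $\gR(n) \simeq B\beta_n$ (or its image in $\mathsf{G}$) as its operadic label, while the associative bar construction carries only the corresponding associative piece. The hypothesis $H^{E_2}_{g,d}(\gR) = 0$ for $d < \rho(g)-1$ together with \cite[Theorem 14.4]{e2cellsI} controls the connectivity of the $\gR$-factors appearing in these configurations, and the compatibility $\rho * \sigma \geq \sigma * \sigma$ is exactly what is needed to push the $n \geq 2$ contributions into bidegrees with $d \geq (\sigma * \sigma)(g)$, so that in the range $d < (\sigma * \sigma)(g)$ (respectively $d < (\sigma * \sigma)(g)+1$) only the $n = 1$ associative piece survives, yielding the claimed isomorphism (respectively epimorphism).

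The main obstacle is that the proof of \cite[Theorem 15.9]{e2cellsI} in the symmetric case invokes \cite[Theorem 15.3]{e2cellsI} as a black box, and the latter fails for $k=2$ when $\mathsf{G}$ is only braided, as flagged by Example \ref{ex:Counterexample}. To circumvent this I would avoid any intermediate comparison between $E_1$- and $E_2$-indecomposables, and instead compare the two bar complexes directly via a spectral sequence argument applied to the double bar construction. For $k=2$ the relevant braidings enter at only one operadic level, so the structure maps, coherences, and spectral sequence differentials are all well-defined using only the braided monoidal structure on $\mathsf{G}$. The delicate point is verifying that, after passing to associated graded and identifying $E^1$-terms with configuration-labelled tensors of $\gR$ and cells, the three hypotheses $\rho * \rho \geq \rho$, $\sigma * \sigma \geq \sigma$, and $\rho * \sigma \geq \sigma * \sigma$ collectively ensure that every differential entering the critical range $(g, d < (\sigma * \sigma)(g) + 1)$ already vanishes, which recovers the iso/epi statement without recourse to any symmetric structure.
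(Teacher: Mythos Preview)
Your overall strategy---reduce via CW approximation to a single $E_2$-cell attachment, then compare the $\overline{\gR}$-module bar construction with the $E_2$-bar construction---matches the architecture of \cite[Theorem 15.9]{e2cellsI}, and you correctly identify that the obstruction is the failure of \cite[Theorem 15.3]{e2cellsI} in the braided case. But there is a genuine gap at the heart of your argument.

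The key step in the proof of \cite[Theorem 15.9]{e2cellsI} is the connectivity of the natural map
\[
B \lra B\bigl(\bunit, \overline{\gE}_2(A), \overline{\gE}_2(A \vee B)\bigr),
\]
where $A$ carries the $\gR$-cells and $B$ the new relative cell. In the symmetric case this follows from the splitting \eqref{eq:e2cells15pt3}, and \emph{only} this connectivity estimate is actually used. Your proposal replaces the splitting by a direct spectral sequence analysis, asserting that ``every differential entering the critical range \ldots\ already vanishes'' as a formal consequence of the hypotheses $\rho * \rho \geq \rho$, $\sigma * \sigma \geq \sigma$, $\rho * \sigma \geq \sigma * \sigma$. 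This is where the argument breaks down: the vanishing is \emph{not} formal. If one writes $B(\bunit, \overline{\gE}_2(A), \overline{\gE}_2(A \vee B))$ as an analytic functor $\bigvee_{a,b} |C(a,b)_\bullet| \wedge_{\beta_{a,b}} (A^{\otimes a} \otimes B^{\otimes b})$, the required estimate is that $|C(a,b)_\bullet|$ is $a$-connective (and contractible for $b=0$). This is a statement about specific semi-simplicial $\beta_{a,b}$-sets built from cosets $\beta_{a,b}/(\beta_{a_1} \times \cdots \times \beta_{a_p} \times \beta_{a_{p+1},b})$, and no amount of bookkeeping with abstract connectivities will prove it.

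The paper establishes this connectivity by identifying $\pi_0 C(a,b)_\bullet$ with the suspension of the nerve of a poset $\mathsf{S}(a,b)$ of arcs in a disc with $a$ black and $b$ white marked points, and then proving $\mathsf{S}(a,b)$ is $(a-2)$-connected by induction on $a$ via a Nerve Theorem argument, using as geometric input the Hatcher--Wahl connectivity of an auxiliary arc complex $A(\Sigma^{a,b})$. This is the genuinely new ingredient your proposal is missing: a combinatorial/geometric connectivity result specific to braid groups, not deducible from the algebraic hypotheses on $\rho$ and $\sigma$.
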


The only change from the $k=2$ case of \cite[Theorem 15.9]{e2cellsI} is that $\mathsf{G}$ is only required to be braided monoidal, rather than symmetric monoidal.

Let us first explain the issue. The proof of \cite[Theorem 15.9]{e2cellsI} uses \cite[Theorem 15.3]{e2cellsI}, which when $\mathsf{G}$ is symmetric monoidal provides an equivalence
\begin{equation}\label{eq:e2cells15pt3}
\overline{\gE}_2(A \vee B) \simeq \overline{\gE}_2(A) \otimes E_2^+(E_1^+(S^1 \wedge A) \otimes B)
\end{equation}
of left $\overline{\gE}_2(A)$-modules. However, if $\mathsf{G}$ is only braided monoidal then there is no such equivalence. 

To explain why, recall as in Section \ref{sec:setup} that to discuss $E_2$-algebras in a category which is only braided monoidal we use the braided version $\mathcal{C}^{\mathsf{FB}_2}_2$ of the non-unitary little 2-cubes operad \cite[Definition 12.6]{e2cellsI}, which has $\mathcal{C}^{\mathsf{FB}_2}_2(n)$ contractible for each $n>0$.

\begin{example}\label{ex:Counterexample}
Let $\mathsf{G} = \mathsf{FB}_2$, the free braided monoidal groupoid on one generator, i.e.\ $\mathsf{G} = \bigsqcup_{n \geq 0} \{n\} /\!\!/ \beta_n$, and take $\mathsf{S} = \mathsf{sMod}_\bZ$. Let $A=B=\{1\}_*(\bZ)$, with $\bZ$ considered to be in degree 0. Then on the left-hand side of \eqref{eq:e2cells15pt3} we have
$$\overline{\gE}_2(A \vee B)(\{n\}) \simeq (\bZ \oplus \bZ)^{\otimes n}.$$
This is because, by definition of Day convolution, in $\mathsf{sMod}_\bZ^\mathsf{G}$ the object $(A \vee B)^{\otimes n}$ is supported at $\{n\}$ and is here given by $\mathrm{Ind}_{\beta_1 \times \cdots \times \beta_1}^{\beta_n}((\bZ \oplus \bZ)^{\otimes n})$, so when we apply $\mathcal{C}_2^{\mathsf{FB_2}}(n) \times_{\beta_n} -$ (cf.\ \cite[Definition 12.6]{e2cellsI}) we obtain $\bZ[\mathcal{C}_2^{\mathsf{FB_2}}(n)] \otimes (\bZ \oplus \bZ)^{\otimes n} \simeq (\bZ \oplus \bZ)^{\otimes n}$, using that $\mathcal{C}_2^{\mathsf{FB_2}}(n)$ is contractible. In particular, in each grading the homology of $\overline{\gE}_2(A \vee B)$ is supported in degree zero.

On the other hand, the right-hand side of \eqref{eq:e2cells15pt3} contains as a retract $A \otimes (S^1 \wedge A) \otimes B$. This is supported on the object $\{3\}$ where it is given by
$$\mathrm{Ind}_{\beta_1 \times \beta_1 \times \beta_1}^{\beta_3}(\bZ \otimes (S^1 \wedge \bZ) \otimes \bZ),$$
which has nontrivial first homology. Thus \eqref{eq:e2cells15pt3} cannot hold.
\end{example}

Our solution to this issue will be that although \eqref{eq:e2cells15pt3} need not hold when $\mathsf{G}$ is braided monoidal, a certain connectivity estimate for the natural morphism $B \to B(\bunit, \overline{\gE}_2(A), \overline{\gE}_2(A \vee B))$ that one would deduce from \eqref{eq:e2cells15pt3} does in any case hold, and it is only this connectivity estimate that is used in the proof of \cite[Theorem 15.9]{e2cellsI}. The required connectivity estimate is as follows.

\begin{prop}
Suppose that $\mathsf{S}$ satisfies \cite[Axiom 11.19]{e2cellsI}, and that $\mathsf{G}$ is braided monoidal and Artinian. Let $\sigma, \rho : \mathsf{G} \to [-\infty, \infty]_{\geq}$ be abstract connectivities with $\sigma * \sigma \geq \sigma$, $\rho * \rho \geq \rho$, and $\rho * \sigma \geq \sigma * \sigma$. If $A \in \mathsf{C} := \mathsf{S}^\mathsf{G}$ is homologically $(\rho-1)$-connective and $B \in \mathsf{C}$ is homologically $\sigma$-connective then the natural map
$$B \lra B(\bunit, \overline{\gE}_2(A), \overline{\gE}_2(A \vee B))$$
is homologically $\sigma*\sigma$-connective.
\end{prop}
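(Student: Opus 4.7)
The plan is to decompose $\overline{\gE}_2(A\vee B)$ by its natural $B$-count $\bN$-grading and analyse the bar construction on each piece.

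First I would observe that $A\vee B$ is canonically $\bN$-graded with $A$ in degree $0$ and $B$ in degree $1$, so $\overline{\gE}_2(A\vee B)$ inherits a compatible $\bN$-grading, preserved by the left action of $\overline{\gE}_2(A)\subset \overline{\gE}_2(A\vee B)$. This yields a splitting of left $\overline{\gE}_2(A)$-modules
$$\overline{\gE}_2(A\vee B) \ \simeq \ \bigoplus_{k\ge 0} F_k,$$
with $F_0 = \overline{\gE}_2(A)$ and $F_k$ the sub-object of $B$-count $k$. Applying $B(\bunit,\overline{\gE}_2(A),-)$ termwise reduces the proposition to estimates on the individual summands, and the natural map from $B$ lands in the $k=1$ summand. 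For $k=0$ we have $B(\bunit,\overline{\gE}_2(A),\overline{\gE}_2(A))\simeq \bunit$, a contribution supported only in $\mathsf{G}$-grade $0$. For $k\ge 2$, a K\"unneth-type estimate gives that $F_k$ is at least $\sigma^{*k}$-connective; iterating $\sigma*\sigma\ge \sigma$ yields $\sigma^{*k}\ge \sigma*\sigma$ for $k\ge 2$, and since the bar construction over the unital algebra $\overline{\gE}_2(A)$ does not decrease connectivity (as $\bunit$ is a retract of $\overline{\gE}_2(A)$), each $B(\bunit,\overline{\gE}_2(A),F_k)$ is $(\sigma*\sigma)$-connective.

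The main case is $k=1$. I would use the strict associative monoid model of \cite[Section 12.2]{e2cellsI} to introduce the left-multiplication map
$$\phi \colon \overline{\gE}_2(A) \otimes B \lra F_1, \quad x \otimes b \longmapsto x\cdot b,$$
which places $x$ strictly to the left of the single $B$-element $b$. Since $\overline{\gE}_2(A)\otimes B$ is the free left $\overline{\gE}_2(A)$-module on $B$, we have $B(\bunit,\overline{\gE}_2(A),\overline{\gE}_2(A)\otimes B)\simeq B$, and the map in the proposition is precisely $B(\bunit,\overline{\gE}_2(A),\phi)$. Its cofibre is therefore $B(\bunit,\overline{\gE}_2(A),\mathrm{cof}(\phi))$, so it suffices to show $\mathrm{cof}(\phi)$ is at least $(\rho*\sigma)$-connective; by hypothesis $\rho*\sigma\ge \sigma*\sigma$, and the bar construction preserves the bound.

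The main obstacle is establishing this $(\rho*\sigma)$-connectivity of $\mathrm{cof}(\phi)$ in the braided case. In the symmetric case it is immediate from the splitting $\overline{\gE}_2(A\vee B)\simeq \overline{\gE}_2(A)\otimes \overline{\gE}_2^+(\overline{\gE}_1^+(S^1\wedge A)\otimes B)$ of \cite[Theorem 15.3]{e2cellsI}, which identifies $\mathrm{cof}(\phi)$ with $\overline{\gE}_2(A)\otimes \overline{\gE}_1(S^1\wedge A)\otimes B$; the crucial $S^1$-factor (raising the estimate from $\rho*\sigma-1$ to $\rho*\sigma$) records the loop in configuration space swept out by moving an $A$ past $B$. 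In the braided setting I would filter $F_1$ cellularly, indexed by the number of $A$'s lying to the right of $B$ in the strict monoid model, and argue that each positively-indexed filtration quotient still carries the required $S^1$-shift by working directly with the braided configuration spaces $\mathcal{C}_2^{\mathsf{FB}_2}(n+1)$; the rest of the argument then proceeds unchanged from the symmetric case.
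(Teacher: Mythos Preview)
Your decomposition by $B$-count matches the $b$-index in the paper's analytic-functor expansion, so the overall shape is reasonable, but there is a genuine gap that affects both the $k\ge 2$ and $k=1$ cases.

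For $k \geq 2$, the claim that $F_k$ is $\sigma^{*k}$-connective is false: the summand of $F_k$ built from $a$ copies of $A$ and $k$ copies of $B$ has connectivity only $(\rho-1)^{*a} * \sigma^{*k} = \rho^{*a} * \sigma^{*k} - a$, and the ``$-a$'' does not disappear under a K\"unneth estimate on $F_k$ alone. (Concretely, take $A = B = X_*(\bZ)$ as in Example~\ref{ex:Counterexample}: then $F_k(\{n\}) \simeq \bZ^{\binom{n}{k}}$ is concentrated in degree $0$, nowhere near $n$-connective.) Since your bar-construction step only \emph{preserves} connectivity, you obtain at best $\sigma * \sigma - a$ on the $(a,k)$-piece, which is not enough.

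This is exactly the same deficit you face for $k=1$: what is actually needed, uniformly for every $(a,b)$ with $b\ge 1$, is that passing to the bar construction \emph{raises} connectivity by $a$. The paper achieves this by writing $B(\bunit, \overline{\gE}_2(A), \overline{\gE}_2(A \vee B))$ as $\bigvee_{a,b} |C(a,b)_\bullet| \wedge_{\beta_{a,b}} (A^{\otimes a} \otimes B^{\otimes b})$ and proving that the coefficient space $|C(a,b)_\bullet|$ is $a$-connective (and contractible when $b=0$). It does so by identifying $\pi_0 C(a,b)_\bullet$ with the suspension of an explicit arc poset $\mathsf{S}(a,b)$ in a disc with $a$ black and $b$ white marked points, and establishing its $(a-2)$-connectedness by induction on $a$ via the Nerve Theorem together with the Hatcher--Wahl arc-complex theorem. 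Your $k=1$ sketch---filter by ``the number of $A$'s to the right of $B$'' and claim an $S^1$-shift on each positive filtration quotient---is a gesture at precisely this step; Example~\ref{ex:Counterexample} shows the symmetric-monoidal splitting you are trying to mimic genuinely fails in the braided setting, and nothing in your outline substitutes for the arc-complex argument that replaces it.
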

\begin{proof}
Let $\beta_{a_1, a_2, \ldots, a_r}$ denote the subgroup of the braid group $\beta_{a_1 + a_2 + \cdots + a_r}$ consisting of those braids which induce a permutation which preserves the decomposition
$$\{1,2,\ldots, a_1\} \sqcup\{a_1+1, \ldots, a_1+a_2\} \sqcup \cdots \sqcup \{a_1 + a_2 + \cdots + a_{r-1} + 1, \ldots, a_1 + a_2 + \cdots + a_r\}.$$
In the braided monoidal category $\mathsf{C}$ we have
$$(A \vee B)^{\otimes n} \cong \bigvee_{a+b = n} \mathrm{Ind}_{\beta_{a,b}}^{\beta_n} (A^{\otimes a} \otimes B^{\otimes b})$$
and so
$$\overline{\gE}_2(A \vee B) \cong \bunit \vee\bigvee_{a+ b \geq 1} \mathrm{Res}^{\beta_{a+b}}_{\beta_{a,b}}((0, \infty) \times \mathcal{C}_2^{\mathsf{FB}_2}(a+b))_+ \wedge_{\beta_{a,b}}(A^{\otimes a} \otimes B^{\otimes b}).$$
Similarly $\overline{\gE}_2(A) \cong \bunit \vee \bigvee_{n \geq 1} ((0,\infty) \times \mathcal{C}_2^{\mathsf{FB}_2}(n)_+ \wedge_{\beta_n} A^{\otimes n}$. Using these identities we may express $B(\bunit, \overline{\gE}_2(A), \overline{\gE}_2(A \vee B))$ as an analytic functor of the variables $A$ and $B$ in the form
\begin{equation}\label{eq:AnalyticFunctor}
\bigvee_{a, b \geq 0} |C(a, b)_\bullet| \wedge_{\beta_{a,b}}(A^{\otimes a} \otimes B^{\otimes b})
\end{equation}
where $C(a, b)_\bullet$ is the semi-simplicial pointed space (with free $\beta_{a,b}$-action) given as follows. The space $C(a,b)_p$ is
$$\bigvee_{\substack{a_1 + a_2 + \cdots\\ + a_p + a_{p+1} = a}} \mathrm{Ind}^{\beta_{a,b}}_{\beta_{a_1} \times\beta_{a_2} \times \cdots \times \beta_{a_p} \times \beta_{a_{p+1}, b}} \left(\left(\prod_{i=1}^p (0,\infty) \times \mathcal{C}_2^{\mathsf{FB}_2}(a_i)\right) \times (0, \infty) \times \mathcal{C}_2^{\mathsf{FB}_2}(a_{p+1}+b)\right)_+,$$
with face maps given as in the two-sided bar construction.

Under the given connectivity assumptions $A^{\otimes a} \otimes B^{\otimes b}$ is $(\rho-1)^{* a} * \sigma^{*b}$-connective, i.e.\ $(\rho^{*a} * \sigma^{* b} - a)$-connective. As $|C(a,b)_\bullet|$ is a free $\beta_{a,b}$-space, the claim will follow from the decomposition \eqref{eq:AnalyticFunctor} as long as $|C(a,b)_\bullet|$ is $a$-connective, and contractible for $b=0$. Because in this case $|C(a, b)_\bullet| \wedge_{\beta_{a,b}}(A^{\otimes a} \otimes B^{\otimes b})$ is contractible for $b=0$, and is $(\rho^{*a} * \sigma^{* b})$-connective otherwise, so is at least $\sigma * \sigma$-connective except when $(a,b) = (0,1)$.

To prove this connectivity statement we observe that the $C(a,b)_p$ are homotopy-discrete, because $(0,\infty)$ and $\mathcal{C}_2^{\mathsf{FB}_2}(n)$ are all contractible, so the semi-simplicial pointed space $C(a,b)_\bullet$ is levelwise homotopy equivalent to the semi-simplicial pointed set $\pi_0 C(a,b)_\bullet$ having
$$\pi_0 C(a,b)_p = \bigvee_{\substack{a_1 + a_2 + \cdots\\ + a_p + a_{p+1} = a}} \left(\frac{\beta_{a,b}}{\beta_{a_1} \times\beta_{a_2} \times \cdots \times \beta_{a_p} \times \beta_{a_{p+1}, b}}\right)_+.$$
This semi-simplicial pointed set admits a system of degeneracies, by setting $a_i=0$, making it a simplicial pointed set. The connectivity of this simplicial set can be analysed by the same argument as \cite[Section 4]{e2cellsII}, as we now explain.

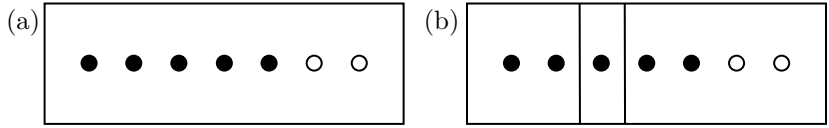
\begin{figure}[h]

\tikzset{every picture/.style={line width=0.75pt}} 

\begin{tikzpicture}[x=0.75pt,y=0.75pt,yscale=-0.75,xscale=0.75]

\draw   (29.67,10) -- (268.5,10) -- (268.5,90.75) -- (29.67,90.75) -- cycle ;
\draw  [fill={rgb, 255:red, 0; green, 0; blue, 0 }  ,fill opacity=1 ] (54.25,50) .. controls (54.25,47.24) and (56.49,45) .. (59.25,45) .. controls (62.01,45) and (64.25,47.24) .. (64.25,50) .. controls (64.25,52.76) and (62.01,55) .. (59.25,55) .. controls (56.49,55) and (54.25,52.76) .. (54.25,50) -- cycle ;
\draw   (204,50) .. controls (204,47.24) and (206.24,45) .. (209,45) .. controls (211.76,45) and (214,47.24) .. (214,50) .. controls (214,52.76) and (211.76,55) .. (209,55) .. controls (206.24,55) and (204,52.76) .. (204,50) -- cycle ;
\draw   (234,50) .. controls (234,47.24) and (236.24,45) .. (239,45) .. controls (241.76,45) and (244,47.24) .. (244,50) .. controls (244,52.76) and (241.76,55) .. (239,55) .. controls (236.24,55) and (234,52.76) .. (234,50) -- cycle ;
\draw  [fill={rgb, 255:red, 0; green, 0; blue, 0 }  ,fill opacity=1 ] (84,50) .. controls (84,47.24) and (86.24,45) .. (89,45) .. controls (91.76,45) and (94,47.24) .. (94,50) .. controls (94,52.76) and (91.76,55) .. (89,55) .. controls (86.24,55) and (84,52.76) .. (84,50) -- cycle ;
\draw  [fill={rgb, 255:red, 0; green, 0; blue, 0 }  ,fill opacity=1 ] (114,50) .. controls (114,47.24) and (116.24,45) .. (119,45) .. controls (121.76,45) and (124,47.24) .. (124,50) .. controls (124,52.76) and (121.76,55) .. (119,55) .. controls (116.24,55) and (114,52.76) .. (114,50) -- cycle ;
\draw  [fill={rgb, 255:red, 0; green, 0; blue, 0 }  ,fill opacity=1 ] (144.25,50) .. controls (144.25,47.24) and (146.49,45) .. (149.25,45) .. controls (152.01,45) and (154.25,47.24) .. (154.25,50) .. controls (154.25,52.76) and (152.01,55) .. (149.25,55) .. controls (146.49,55) and (144.25,52.76) .. (144.25,50) -- cycle ;
\draw  [fill={rgb, 255:red, 0; green, 0; blue, 0 }  ,fill opacity=1 ] (174,50) .. controls (174,47.24) and (176.24,45) .. (179,45) .. controls (181.76,45) and (184,47.24) .. (184,50) .. controls (184,52.76) and (181.76,55) .. (179,55) .. controls (176.24,55) and (174,52.76) .. (174,50) -- cycle ;
\draw   (310.67,10) -- (549.5,10) -- (549.5,90.75) -- (310.67,90.75) -- cycle ;
\draw  [fill={rgb, 255:red, 0; green, 0; blue, 0 }  ,fill opacity=1 ] (335.25,50) .. controls (335.25,47.24) and (337.49,45) .. (340.25,45) .. controls (343.01,45) and (345.25,47.24) .. (345.25,50) .. controls (345.25,52.76) and (343.01,55) .. (340.25,55) .. controls (337.49,55) and (335.25,52.76) .. (335.25,50) -- cycle ;
\draw   (485,50) .. controls (485,47.24) and (487.24,45) .. (490,45) .. controls (492.76,45) and (495,47.24) .. (495,50) .. controls (495,52.76) and (492.76,55) .. (490,55) .. controls (487.24,55) and (485,52.76) .. (485,50) -- cycle ;
\draw   (515,50) .. controls (515,47.24) and (517.24,45) .. (520,45) .. controls (522.76,45) and (525,47.24) .. (525,50) .. controls (525,52.76) and (522.76,55) .. (520,55) .. controls (517.24,55) and (515,52.76) .. (515,50) -- cycle ;
\draw  [fill={rgb, 255:red, 0; green, 0; blue, 0 }  ,fill opacity=1 ] (365,50) .. controls (365,47.24) and (367.24,45) .. (370,45) .. controls (372.76,45) and (375,47.24) .. (375,50) .. controls (375,52.76) and (372.76,55) .. (370,55) .. controls (367.24,55) and (365,52.76) .. (365,50) -- cycle ;
\draw  [fill={rgb, 255:red, 0; green, 0; blue, 0 }  ,fill opacity=1 ] (395,50) .. controls (395,47.24) and (397.24,45) .. (400,45) .. controls (402.76,45) and (405,47.24) .. (405,50) .. controls (405,52.76) and (402.76,55) .. (400,55) .. controls (397.24,55) and (395,52.76) .. (395,50) -- cycle ;
\draw  [fill={rgb, 255:red, 0; green, 0; blue, 0 }  ,fill opacity=1 ] (425.25,50) .. controls (425.25,47.24) and (427.49,45) .. (430.25,45) .. controls (433.01,45) and (435.25,47.24) .. (435.25,50) .. controls (435.25,52.76) and (433.01,55) .. (430.25,55) .. controls (427.49,55) and (425.25,52.76) .. (425.25,50) -- cycle ;
\draw  [fill={rgb, 255:red, 0; green, 0; blue, 0 }  ,fill opacity=1 ] (455,50) .. controls (455,47.24) and (457.24,45) .. (460,45) .. controls (462.76,45) and (465,47.24) .. (465,50) .. controls (465,52.76) and (462.76,55) .. (460,55) .. controls (457.24,55) and (455,52.76) .. (455,50) -- cycle ;
\draw    (385.6,9.6) -- (385.8,90.6) ;
\draw    (415.6,9.6) -- (415.8,90.6) ;

\draw (2,11.5) node [anchor=north west][inner sep=0.75pt]   [align=left] {(a)};
\draw (280,11.5) node [anchor=north west][inner sep=0.75pt]   [align=left] {(b)};

\end{tikzpicture}

	\caption{(a) The standard configuration with $a=5$ and $b=2$. (b) The standard 1-simplex $\sigma(2,1,2)$.}
	\label{fig:2}

\end{figure}

Fix a configuration of $a$ black points and $b$ white points in $[0,a+b] \times [0,1]$, as shown in Figure \ref{fig:2} (a), and let $\Sigma^{a,b}$ denote the surface given by this square with these marked points. Let the poset $\mathsf{S}(a,b)$ consist of the set of isotopy classes of smoothly embedded arcs $\alpha : [0,1] \to \Sigma^{a,b}$ disjoint from the marked points and with $\alpha(0) \in [0,a+b] \times \{0\}$ and $\alpha(1) \in [0,a+b] \times \{1\}$, such that the left-hand side of the arc contains a non-zero number of points, and the right-hand side contains all white points. Say that $[\alpha] \leq [\alpha']$ if $\alpha$ and $\alpha'$ can be represented by disjoint embedded arcs with $\alpha(0) \leq \alpha'(0) \in [0,a+b]$. Let $S(a,b)_\bullet$ denote the simplicial nerve of the poset $\mathsf{S}(a,b)$. The mapping class group of $\Sigma^{a,b}$, where diffeomorphisms must fix the boundary but are allowed to permute the black or the white marked points (but not interchange them), is the group $\beta_{a,b}$, and it acts on $S(a,b)_\bullet$. To a set of natural numbers $a_0 > 0$, $a_1, a_2, \ldots, a_{p+1} \geq 0$ there is an associated $p$-simplex $\sigma(a_0, \ldots, a_{p+1}) \in S(a,b)_p$ as shown in Figure \ref{fig:2} (b), given by vertical arcs partitioning the black points into groups of the indicated sizes. Every $p$-simplex is in the orbit of a unique $\sigma(a_0, \ldots, a_{p+1})$ (by counting the number of black points between the arcs). Furthermore, the stabiliser of this simplex under the $\beta_{a,b}$-action is the subgroup
$$\beta_{a_0} \times \cdots \times \beta_{a_{p}} \times \beta_{a_{p+1}, b}.$$
We therefore recognise the pointed simplicial set $\pi_0 C(a,b)_\bullet$ as the suspension of the simplicial set $S(a,b)_\bullet$.

When $b=0$ the poset $\mathsf{S}(a,0)$ has a maximal element, given by the arc having no points to its right, and so $\pi_0 C(a,0)_\bullet$ is indeed contractible. When $b > 0$ we must show that $|\pi_0C(a,b)_\bullet|$ is $a$-connective, i.e.\ that $|S(a,b)_\bullet|$ is $(a-2)$-connected. We will do this by induction on $a$, using the Nerve Theorem as formulated in \cite[Corollary 4.2]{e2cellsII}. It clearly holds for $a \leq 1$.

Let $A(\Sigma^{a,b})$ denote the simplicial complex with vertices the isotopy classes of smoothly embedded arcs $\gamma : [0,1] \to \Sigma^{a,b}$ with $\gamma(0)= (0,\tfrac{1}{2})$ and $\gamma(1)$ a black marked point. A collection $[\gamma_0], \ldots, [\gamma_p]$ spans a simplex if the $\gamma_i$ can be realised disjointly except at $\gamma_i(0)= (0,\tfrac{1}{2})$. By a theorem of Hatcher and Wahl \cite[Proposition 7.2]{HatcherWahl} the simplicial complex $A(\Sigma^{a,b})$ is $(a-2)$-connected. We consider the functor
$$F : \mathsf{Simp}(A(\Sigma^{a,b}))^{op} \lra \{\text{downwards-closed subposets of }\mathsf{S}(a,b)^{op}\}$$
which assigns to a simplex $\{[\gamma_0], \ldots, [\gamma_p]\}$ of $A(\Sigma^{a,b})$ the subposet of $\mathsf{S}(a,b)^{op}$ given by those $[\alpha]$'s such that the arcs $\alpha, \gamma_0, \ldots, \gamma_p$ can be realised disjointly: see Figure \ref{fig:3} (a). This is clearly a downwards-closed subposet, and defines a functor.

\begin{figure}[h]

\tikzset{every picture/.style={line width=0.75pt}} 

\begin{tikzpicture}[x=0.75pt,y=0.75pt,yscale=-0.75,xscale=0.75]
\draw   (29.67,10) -- (268.5,10) -- (268.5,90.75) -- (29.67,90.75) -- cycle ;
\draw  [fill={rgb, 255:red, 0; green, 0; blue, 0 }  ,fill opacity=1 ] (54.25,50) .. controls (54.25,47.24) and (56.49,45) .. (59.25,45) .. controls (62.01,45) and (64.25,47.24) .. (64.25,50) .. controls (64.25,52.76) and (62.01,55) .. (59.25,55) .. controls (56.49,55) and (54.25,52.76) .. (54.25,50) -- cycle ;
\draw   (204,50) .. controls (204,47.24) and (206.24,45) .. (209,45) .. controls (211.76,45) and (214,47.24) .. (214,50) .. controls (214,52.76) and (211.76,55) .. (209,55) .. controls (206.24,55) and (204,52.76) .. (204,50) -- cycle ;
\draw   (234,50) .. controls (234,47.24) and (236.24,45) .. (239,45) .. controls (241.76,45) and (244,47.24) .. (244,50) .. controls (244,52.76) and (241.76,55) .. (239,55) .. controls (236.24,55) and (234,52.76) .. (234,50) -- cycle ;
\draw  [fill={rgb, 255:red, 0; green, 0; blue, 0 }  ,fill opacity=1 ] (84,50) .. controls (84,47.24) and (86.24,45) .. (89,45) .. controls (91.76,45) and (94,47.24) .. (94,50) .. controls (94,52.76) and (91.76,55) .. (89,55) .. controls (86.24,55) and (84,52.76) .. (84,50) -- cycle ;
\draw  [fill={rgb, 255:red, 0; green, 0; blue, 0 }  ,fill opacity=1 ] (114,50) .. controls (114,47.24) and (116.24,45) .. (119,45) .. controls (121.76,45) and (124,47.24) .. (124,50) .. controls (124,52.76) and (121.76,55) .. (119,55) .. controls (116.24,55) and (114,52.76) .. (114,50) -- cycle ;
\draw  [fill={rgb, 255:red, 0; green, 0; blue, 0 }  ,fill opacity=1 ] (144.25,50) .. controls (144.25,47.24) and (146.49,45) .. (149.25,45) .. controls (152.01,45) and (154.25,47.24) .. (154.25,50) .. controls (154.25,52.76) and (152.01,55) .. (149.25,55) .. controls (146.49,55) and (144.25,52.76) .. (144.25,50) -- cycle ;
\draw  [fill={rgb, 255:red, 0; green, 0; blue, 0 }  ,fill opacity=1 ] (174,50) .. controls (174,47.24) and (176.24,45) .. (179,45) .. controls (181.76,45) and (184,47.24) .. (184,50) .. controls (184,52.76) and (181.76,55) .. (179,55) .. controls (176.24,55) and (174,52.76) .. (174,50) -- cycle ;
\draw   (310.67,10) -- (549.5,10) -- (549.5,90.75) -- (310.67,90.75) -- cycle ;
\draw  [fill={rgb, 255:red, 0; green, 0; blue, 0 }  ,fill opacity=1 ] (335.25,50) .. controls (335.25,47.24) and (337.49,45) .. (340.25,45) .. controls (343.01,45) and (345.25,47.24) .. (345.25,50) .. controls (345.25,52.76) and (343.01,55) .. (340.25,55) .. controls (337.49,55) and (335.25,52.76) .. (335.25,50) -- cycle ;
\draw   (485,50) .. controls (485,47.24) and (487.24,45) .. (490,45) .. controls (492.76,45) and (495,47.24) .. (495,50) .. controls (495,52.76) and (492.76,55) .. (490,55) .. controls (487.24,55) and (485,52.76) .. (485,50) -- cycle ;
\draw   (515,50) .. controls (515,47.24) and (517.24,45) .. (520,45) .. controls (522.76,45) and (525,47.24) .. (525,50) .. controls (525,52.76) and (522.76,55) .. (520,55) .. controls (517.24,55) and (515,52.76) .. (515,50) -- cycle ;
\draw  [fill={rgb, 255:red, 0; green, 0; blue, 0 }  ,fill opacity=1 ] (365,50) .. controls (365,47.24) and (367.24,45) .. (370,45) .. controls (372.76,45) and (375,47.24) .. (375,50) .. controls (375,52.76) and (372.76,55) .. (370,55) .. controls (367.24,55) and (365,52.76) .. (365,50) -- cycle ;
\draw  [fill={rgb, 255:red, 0; green, 0; blue, 0 }  ,fill opacity=1 ] (395,50) .. controls (395,47.24) and (397.24,45) .. (400,45) .. controls (402.76,45) and (405,47.24) .. (405,50) .. controls (405,52.76) and (402.76,55) .. (400,55) .. controls (397.24,55) and (395,52.76) .. (395,50) -- cycle ;
\draw  [fill={rgb, 255:red, 0; green, 0; blue, 0 }  ,fill opacity=1 ] (425.25,50) .. controls (425.25,47.24) and (427.49,45) .. (430.25,45) .. controls (433.01,45) and (435.25,47.24) .. (435.25,50) .. controls (435.25,52.76) and (433.01,55) .. (430.25,55) .. controls (427.49,55) and (425.25,52.76) .. (425.25,50) -- cycle ;
\draw  [fill={rgb, 255:red, 0; green, 0; blue, 0 }  ,fill opacity=1 ] (455,50) .. controls (455,47.24) and (457.24,45) .. (460,45) .. controls (462.76,45) and (465,47.24) .. (465,50) .. controls (465,52.76) and (462.76,55) .. (460,55) .. controls (457.24,55) and (455,52.76) .. (455,50) -- cycle ;
\draw  [dash pattern={on 0.84pt off 2.51pt}]  (30,50) .. controls (49.08,35.69) and (94.33,24.17) .. (99.67,37.17) .. controls (105,50.17) and (100.72,63.41) .. (112.33,65.5) .. controls (123.95,67.59) and (132.67,63.83) .. (149.08,50.38) ;
\draw  [dash pattern={on 0.84pt off 2.51pt}]  (30,50) .. controls (50.67,84.5) and (71,69.17) .. (89,50) ;
\draw    (140,91.17) .. controls (139.95,88.08) and (112.72,85.48) .. (101.33,76.17) .. controls (89.95,66.86) and (96,56.17) .. (97,50) .. controls (98,43.83) and (85.67,37.83) .. (81,44.5) .. controls (76.33,51.17) and (76,58.5) .. (60.33,61.17) .. controls (44.67,63.83) and (42.33,50.33) .. (48,46) .. controls (53.67,41.67) and (92,33.5) .. (97.67,42.17) .. controls (103.33,50.83) and (96,59.17) .. (99.33,66.83) .. controls (102.67,74.5) and (120.67,84.83) .. (138.33,83.17) .. controls (156,81.5) and (170.33,48.17) .. (170,10.17) ;
\draw  [dash pattern={on 0.84pt off 2.51pt}]  (311,48.67) .. controls (330.08,34.35) and (375.33,22.83) .. (380.67,35.83) .. controls (386,48.83) and (381.72,62.08) .. (393.33,64.17) .. controls (404.95,66.25) and (413.67,62.5) .. (430.08,49.04) ;
\draw  [dash pattern={on 0.84pt off 2.51pt}]  (311,48.67) .. controls (331.67,83.17) and (352,67.83) .. (370,48.67) ;
\draw    (313,91.17) .. controls (312.67,81.83) and (312.33,71.83) .. (318.33,67.83) .. controls (324.33,63.83) and (323,75.17) .. (344.67,71.17) .. controls (366.33,67.17) and (376.67,56.17) .. (377.67,50) .. controls (378.67,43.83) and (368.67,38.83) .. (364,45.5) .. controls (359.33,52.17) and (355.33,62.17) .. (339.67,64.83) .. controls (324,67.5) and (318,50.33) .. (323.67,46) .. controls (329.33,41.67) and (370.33,27.5) .. (376,36.17) .. controls (381.67,44.83) and (380,58.17) .. (388.33,66.5) .. controls (396.67,74.83) and (399,74.17) .. (420,65.5) .. controls (441,56.83) and (442.33,46.17) .. (431.67,42.17) .. controls (421,38.17) and (411,62.5) .. (397,60.5) .. controls (383,58.5) and (394.33,43.17) .. (378.33,28.83) .. controls (362.33,14.5) and (323,41.17) .. (317.33,38.5) .. controls (311.67,35.83) and (313,24.17) .. (313,10.5) ;

\draw (3,11.5) node [anchor=north west][inner sep=0.75pt]   [align=left] {(a)};
\draw (284,11.5) node [anchor=north west][inner sep=0.75pt]   [align=left] {(b)};
\draw (38.67,18) node [anchor=north west][inner sep=0.75pt]    {$\gamma _{0}$};
\draw (37.33,71) node [anchor=north west][inner sep=0.75pt]    {$\gamma _{1}$};
\draw (174,13.4) node [anchor=north west][inner sep=0.75pt]    {$\alpha $};

\end{tikzpicture}

\caption{(a) Dotted arcs $\gamma_0$ and $\gamma_1$, and a solid arc $\alpha$ in $F(\{[\gamma_0], [\gamma_1]\})$. (b) The maximal element of $F(\{[\gamma_0], [\gamma_1]\})$.}
\label{fig:3}
\end{figure}
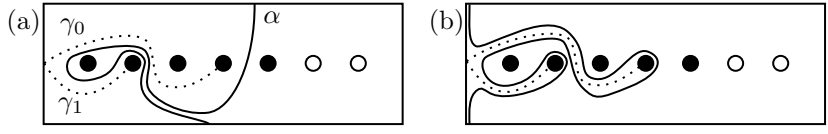

We apply the Nerve Theorem \cite[Corollary 4.2]{e2cellsII} to this functor, with the choices $t_{\mathsf{Simp}(A(\Sigma^{a,b}))}([\gamma_0], \ldots, [\gamma_p]) := p$, $t_{\mathsf{S}(a,b)^{op}}([\alpha]) := \#\{\text{black points to the right of $\alpha$}\}$, and $n:= a-1$. We verify the hypotheses of this theorem:
\begin{enumerate}[(i)]
\item $\mathsf{Simp}(A(\Sigma^{a,b}))$ is $(a-2)$-connected, by Hatcher and Wahl's theorem.

\item $\mathsf{Simp}(A(\Sigma^{a,b}))_{< \{[\gamma_0], \ldots, [\gamma_p]\}}$ is the poset of simplices of the boundary of $\Delta^p$, so is $(p-2)$-connected. The subposet $F([\gamma_0], \ldots, [\gamma_p]) \subset \mathsf{S}(a,b)^{op}$ has a maximal element, given by an arc which has \emph{precisely} the points $\gamma_0(1), \ldots, \gamma_p(1)$ to its left nd runs parallel to the $\gamma_i$ as in Figure \ref{fig:3} (b), so is contractible.

\item $(\mathsf{S}(a,b)^{op})_{<[\alpha]} = (\mathsf{S}(a,b)_{> [\alpha]})^{op}$, and if $\alpha$ has $k$ black points to its right then $\mathsf{S}(a,b)_{> [\alpha]} \cong \mathsf{S}(k,b)$, which by induction may be supposed to be $(k-2)$-connected (i.e.\ $(t_{\mathsf{S}(a,b)^{op}}([\alpha])-2)$-connected). The subposet $\mathsf{Simp}(A(\Sigma^{a,b}))_{[\alpha]}$ may be identified with $\mathsf{Simp}(A(\Sigma^{a-k,0}))$ which by Hatcher and Wahl's theorem is $(a-k-2)$-connected (i.e.\ $((a-1) - t_{\mathsf{S}(a,b)^{op}}([\alpha]) - 1)$-connected).
\end{enumerate}
It follows from the Nerve Theorem that $\mathsf{S}(a,b)^{op}$ is $(a-2)$-connected, as required.
\end{proof}

\bibliographystyle{plain}
\bibliography{MainBib}  

\end{document}